\newtheorem{theorem}{Theorem}[section]
\newtheorem{lemma}[theorem]{Lemma}
\newtheorem{corollary}[theorem]{Corollary}
\newtheorem{proposition}[theorem]{Proposition}
\newtheorem{definition}[theorem]{Definition}
\newtheorem{remark}[theorem]{Remark}
\theoremstyle{definition}
\def\bR{\mathbb{R}}
\def\bC{\mathbb{C}}
\def\bN{\mathbb{N}}
\def\bZ{\mathbb{Z}}
\def\cF{\mathcal{F}}
\def\cS{\mathcal{S}}
\def\cL{\mathcal{L}}
\def\cM{\mathcal{M}}
\def\cW{\mathcal{W}}
\def\cH{\mathcal{H}}
\def\cD{\mathcal{D}}
\def\Dm{\cD_m}
\def\rd{\bR^d}
\def\cn{\bC^n}
\def\zd{\bZ^d}
\def\rdd{\bR^{2d}}
\def\zdd{\bZ^{2d}}
\def\la{\langle}
\def\ra{\rangle}
\def\lc{\left(}
\def\rc{\right)}
\def\lV{\left\lVert}
\def\rV{\right\rVert}
\def\lv{\left\lvert}
\def\rv{\right\rvert}
\def\supp{\mathrm{supp}}
\def\wt{\widetilde}
\def\*b{*_{\bullet}}
\def\w{\mathrm{w}}
\def\tr{\mathrm{Tr}}
\def\boxy{\square}
\def\S0{S^0_{0,0}}
\def\Bd'{B_{\delta'}}
\def\cBd'{\bar{B}_{\delta'}}
\def\mcn{\bC^{n\times n}}
\begin{document}
	
\title[Time-frequency analysis of the Dirac equation]{Time-frequency analysis of the Dirac equation}
\author{S. Ivan Trapasso}
\address{Dipartimento di Scienze Matematiche, Politecnico di Torino, corso Duca degli Abruzzi 24, 10129 Torino, Italy}
\email{salvatore.trapasso@polito.it}
\subjclass[2010]{35Q41, 42B35, 47G30.}
\keywords{Dirac equation, modulation spaces, Wiener amalgam spaces, pseudodifferential operators, vector-valued time-frequency analysis.}

\begin{abstract}
	The purpose of this paper is to investigate several issues concerning the Dirac equation from a time-frequency analysis perspective. More precisely, we provide estimates in weighted modulation and Wiener amalgam spaces for the solutions of the Dirac equation with rough potentials. We focus in particular on bounded perturbations, arising as the Weyl quantization of suitable time-dependent symbols, as well as on quadratic and sub-quadratic non-smooth functions, hence generalizing the results in \cite{kn}. We then prove local well-posedness on the same function spaces for the nonlinear Dirac equation with a general nonlinearity, including power-type terms and the Thirring model. For this study we adopt the unifying framework of vector-valued time-frequency analysis \cite{wahlberg}; most of the preliminary results are stated under general assumptions and hence they may be of independent interest.
\end{abstract}
\maketitle
\section{Introduction}
In this note we consider the Cauchy problem for the $n$-dimensional Dirac equation with a potential $V$:
\begin{equation}\label{dirac eq}
\begin{cases}
i\partial_{t}\psi(t,x)=(\Dm+V)\psi(t,x),\\
\psi(0,x)=\psi_{0}(x), 
\end{cases} \qquad (t,x)\in\mathbb{R}\times\mathbb{R}^{d}.
\end{equation}
Here $\psi(t,x)=(\psi_1(t,x),\ldots,\psi_n(t,x))\in\mathbb{C}^{n}$
is a vector-valued complex wavefunction and the Dirac operator $\Dm$ is defined by
\begin{equation}
\Dm= 2\pi m\alpha_0 -i\sum_{j=1}^d \alpha_j \partial_j,\label{freediracop}
\end{equation}
where $m\ge0$  (mass) and $\alpha_{0},\alpha_{1},\ldots,\alpha_{d}\in \mcn$ is a set of \textit{Dirac matrices}, i.e. $n\times n$ Hermitian matrices satisfying the identities
\begin{equation}\label{dirac matrix def}
\alpha_{i}\alpha_{j}+\alpha_{j}\alpha_{i}=2\delta_{ij}I_{n},\quad\forall \, 0 \le i,j \le d,
\end{equation}
($I_{n}$ is the $n\times n$ identity matrix). For $d=3$ and $n=4$ the standard choice for such matrices is the so-called Dirac's representation:
\begin{equation}
\alpha_{i}=\left(\begin{array}{cc}
0 & \sigma_{i}\\
\sigma_{i} & 0
\end{array}\right),\quad i=1,2,3,\qquad\alpha_0=\left(\begin{array}{cc}
I_{2} & 0\\
0 & -I_{2}
\end{array}\right),
\end{equation}
where we introduced the Pauli matrices
\begin{equation}
\sigma_{1}=\left(\begin{array}{cc}
0 & 1\\
1 & 0
\end{array}\right),\qquad\sigma_{2}=\left(\begin{array}{cc}
0 & -i\\
i & 0
\end{array}\right),\qquad\sigma_{3}=\left(\begin{array}{cc}
1 & 0\\
0 & -1
\end{array}\right).
\end{equation}
In general, for any $d$ there exist several iterative schemes to obtain a set of Dirac matrices and in general the dependence of the (even) dimension $n=n(d)$ on $d$ is a consequence of the chosen construction \cite{kalf}. 

The study of the Dirac equation, like other dispersive equations, may certainly take advantage from the techniques of modern harmonic analysis. In the last decades we have witnessed an increasing interest in the application to PDEs of strategies and function spaces arising in time-frequency analysis. Even if it is impossible to offer a comprehensive list of results, we suggest the papers \cite{benyi 1,benyi unimod 2,cheng,cn strichartz 2,cn wave 3,cn pot mod 4,cnr rough 6,kato ki 1,kato ki 3,wang paper,zhang} and the monographs \cite{gro book,wang book} as examples of the manifold aspects one can handle from this perspective. 

The optimal environment for this approach is provided by modulation spaces, which were introduced by Feichtinger in the '80s \cite{fei new segal,fei modulation 83}. In the first instance they can be thought of as Besov spaces with isometric boxes in the frequency domain instead of dyadic annuli. In fact, a much more insightful definition is given in terms of the global decay of the phase-space concentration of a function or a distribution. To be precise, given a temperate distribution $f \in \cS'(\rd)$ and a non-zero Schwartz window function $g \in \cS(\rd)$, the short-time Fourier transform $V_g f$ is defined as 
\[ V_gf(x,\xi) = \cF[f g(\cdot - x) ](\xi), \qquad (x,\xi) \in \rdd, \]
where $\cF$ denotes the Fourier transform. The modulation space $M^{p,q}(\rd)$, $1 \le p,q \le \infty$ is the space of  distributions $f \in \cS'(\rd)$ such that $\lV V_g f(x,\xi) \rV_{L^q(\rd_{\xi}; L^p(\rd_x))} < \infty$. A better control on the regularity is achieved by introducing weights of polynomial type: for $r,s \in \bR$ the $M^{p,q}_{r,s}(\rd)$-norm of $f$ is given by $\lV V_g f(x,\xi) \rV_{L^q_s(\rd_{\xi}; L^p_r(\rd_x))}$, where 
\begin{equation} \label{weighted leb sp} u \in L^q_s (\rd)  \Leftrightarrow (1+|\cdot|^2)^{s/2}u \in L^q(\rd), \end{equation}
and similarly for $L^p_r(\rd)$. In particular, the parameter $s \ge 0$ can be interpreted as the degree of fractional differentiability of $f \in M^{p,q}_{r,s}$. A strictly related family of spaces is obtained by reversing the order of integration in the mixed-Lebesgue norm. The space $W^{p,q}_{r,s}(\rd)$, traditionally called Wiener amalgam space, contains distributions $f\in \cS'(\rd)$ satisfying $\lV V_g f(x,\xi) \rV_{L^q_s(\rd_{x}; L^p_r(\rd_{\xi}))} < \infty$. There is in fact a deeper connection among these spaces, since it turns out that the elements of $W^{p,q}_{r,s}(\rd)$ are Fourier transforms of functions in $M^{p,q}_{r,s}(\rd)$; see Section 3 for more details. 

The relevance of these function spaces to the study of dispersive PDEs is closely related to the evolution of the phase-space concentration under the corresponding propagators. As an example, while the Schr\"odinger propagator $e^{it\triangle}$ is not bounded on $L^p(\rd)$ except for $p=2$, it is a bounded unimodular Fourier multiplier on any modulation space $M^{p,q}(\rd)$ \cite{benyi unimod 2}. Many results of this type, including improved dispersive and Strichartz estimates, are also known for the wave equation and the Klein-Gordon equation (see the list of papers above).

To the best of our knowledge, the only contribution in this spirit concerning the Dirac equation is the recent paper \cite{kn} by Kato and Naumkin. The authors proved estimates for the solutions of the Dirac equation \eqref{dirac eq} in the free case (Theorem 1.1) and also for quadratic and subquadratic time-dependent smooth potentials (Theorem 1.2); the latter setting also includes an electromagnetic potential with linear growth. Broadly speaking, the main difficulty in dealing with \eqref{dirac eq} lies in that it is a system of coupled equations, hence a strategy for disentangling the components is needed. For instance, this can be done approximately at the level of phase space (see \cite[Eq. 3.17]{kn}) or by projection onto the spectrum of the Dirac operators (see the proof of the dispersive estimate \cite[Eq. 1.8]{kn}). Another standard procedure consists of exploiting the connection with the wave and Klein-Gordon equations when $m=0$ and $m>0$ respectively. Nevertheless, when a non-zero potential $V$ is taken into account most of these procedures loose their usefulness and new ideas are required (cf. for instance \cite{cacciafesta da 2,cacciafesta f 3,d'ancona,erdogan}).

The first aim of this paper is to offer a different point of view that does not require an explicit decoupling technique nor any preliminary knowledge about the Klein-Gordon equation. A naive look at \eqref{dirac eq} would suggest to treat it like a Schr\"odinger-type equation with matrix-valued Hamiltonian $\cH=\Dm + V$. For the free case ($V=0$) the corresponding propagator $U(t)= e^{-it\Dm}$ can be formally viewed as a Fourier multiplier with matrix symbol 
\begin{equation} \label{symb intro} \mu_t(\xi) = \exp\left[ -2\pi i t \lc m\alpha_0 + \sum_{j=1}^d \alpha_j \xi_j \rc  \right]. \end{equation}
This perspective naturally leads to consider estimates on vector-valued modulation and Wiener amalgam spaces by studying the regularity of $\mu_t$ and extending the ordinary boundedness results for Fourier multipliers and more general pseudodifferential operators. Roughly speaking, the definition of the modulation space $M^{p,q}(\rd,E)$, $E$ being a complex Banach space in general, coincides with the one given above with $|\cdot|$ replaced by the norm on $E$; such spaces were first considered by Toft \cite{toft cont 2} and then extensively studied by Wahlberg \cite{wahlberg}. The study of the Dirac equation would only require to consider finite-dimensional vector spaces such as $\cn$ and $\mcn$, so that the subtleties connected with infinite-dimensional target spaces are not relevant here and most of the proofs reduce to componentwise computation. Nevertheless, we decided to embrace this wider perspective and thus the first part of the paper is devoted to extend some results of scalar-valued time-frequency analysis to the vector-valued context. In our opinion, the price of developing these tools in full generality is repaid by a unifying and powerful framework which provides very natural and compact proofs for the main results on the Dirac equation. In passing, we remark that the core of results concerning vector-valued time-frequency analysis is in fact of independent interest and falls into the larger area of infinite-dimensional harmonic analysis \cite{girardi class 1,hytonen 2,weis}, with possible applications to abstract evolution equations \cite{amann quasilin 1,arendt} and generalized stochastic processes \cite{fei hor}.

In that spirit, we are then able to prove the following estimates for the free Dirac propagator. 

\begin{theorem}\label{maint free}
	Let $1\le p,q \le \infty$ and $r,s \in \bR$; denote by $X$ any of the spaces $M^{p,q}_{r,s}(\rd,\cn)$ or $W^{p,q}_{r,s}(\rd,\cn)$. Let $\psi(t,x)$ be the solution of \eqref{dirac eq} with $V\equiv 0$. For any $t\in\bR$ there exists a constant $C_X(t)>0$ such that
	\[ 
	\lV \psi(t,\cdot) \rV_{X} \le C_X(t) \lV\psi_0 \rV_{X}. 
	\] 
	In particular, if $X=M^{p,q}_{0,s}(\rd,\cn)$ there exists a constant $C'>0$ such that
	\begin{equation}\label{C(t) estimate free}
	C_X(t) \le C' (1+|t|)^{d|1/2-1/p|}.
	\end{equation}
\end{theorem}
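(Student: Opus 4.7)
The plan is to view the free Dirac propagator $U(t)=e^{-it\Dm}$ as a Fourier multiplier with the matrix-valued symbol $\mu_t(\xi)$ of \eqref{symb intro}, and to reduce estimates on the vector-valued space $X$ to componentwise bounds involving a short list of scalar Klein--Gordon-type symbols.

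The first step is to exploit the Dirac anticommutation relations \eqref{dirac matrix def}: a direct computation shows that the Hermitian matrix
\[
A(\xi):=m\alpha_0+\sum_{j=1}^d \alpha_j\xi_j
\]
satisfies $A(\xi)^2=\omega(\xi)^2\, I_n$ with $\omega(\xi):=\sqrt{m^2+|\xi|^2}$. By Hermitian functional calculus this gives the closed form
\[
\mu_t(\xi)=\cos(2\pi t\omega(\xi))\, I_n-i\,\frac{\sin(2\pi t\omega(\xi))}{\omega(\xi)}\,A(\xi),
\]
so that each entry of $\mu_t$ is a fixed linear combination, with constant coefficients depending only on the Dirac matrices, of the scalar symbols
\[
c_t(\xi):=\cos(2\pi t\omega(\xi)),\qquad s_t^0(\xi):=m\,\frac{\sin(2\pi t\omega(\xi))}{\omega(\xi)},\qquad s_t^j(\xi):=\xi_j\,\frac{\sin(2\pi t\omega(\xi))}{\omega(\xi)}.
\]

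Next I would invoke the vector-valued Fourier multiplier machinery developed in the preliminary sections. Since $\cn$ is finite-dimensional, boundedness of $\mu_t(D)$ on $X=M^{p,q}_{r,s}(\rd,\cn)$ or $X=W^{p,q}_{r,s}(\rd,\cn)$ reduces componentwise to the boundedness of the scalar multipliers $c_t(D),s_t^0(D),s_t^j(D)$ on $M^{p,q}_{r,s}(\rd)$ and $W^{p,q}_{r,s}(\rd)$, with a common control on the operator norm. The $W^{p,q}_{r,s}$-statement is then recovered from the $M^{p,q}_{r,s}$-statement by the Fourier conjugacy between the two scales recalled in Section~3, while the polynomial weights can be absorbed through the standard symbolic identity relating $V_g(\langle D\rangle^s f)$ to $\langle\xi\rangle^s V_gf$ and the fact that $\langle\xi\rangle^s$ commutes with Fourier multipliers.

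For the explicit growth in \eqref{C(t) estimate free} over $X=M^{p,q}_{0,s}(\rd,\cn)$, I would observe that $c_t$ and $s_t^0,s_t^j$ are precisely the fundamental Fourier multipliers associated with the Klein--Gordon equation (respectively the wave equation when $m=0$). These are unimodular-type multipliers whose norms on $M^{p,q}(\rd)$ are controlled by $C(1+|t|)^{d|1/2-1/p|}$, by the Bényi--Gröchenig--Okoudjou--Rogers-style estimates extended to Klein--Gordon phases in the references \cite{benyi unimod 2,cn wave 3,wang paper,wang book}; the extra factors $\xi_j/\omega(\xi)$ and $m/\omega(\xi)$ lie in an $\S0$-type symbol class and therefore preserve modulation boundedness uniformly in $t$. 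Combining componentwise with the finite-dimensional matrix structure of $\mu_t$ yields the desired bound. The main obstacle I anticipate is verifying that this dyadic/$\S0$ absorption of the amplitudes $A(\xi)/\omega(\xi)$ and of the weights $\langle\xi\rangle^s$ does not introduce any hidden time growth beyond the Klein--Gordon one, so that the sharp exponent $d|1/2-1/p|$ transfers cleanly from the scalar case to the vector-valued estimate.
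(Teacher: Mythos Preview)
Your computation of the symbol $\mu_t(\xi)$ via the anticommutation relations matches the paper exactly, and the reduction of the matrix multiplier to a finite linear combination of the scalar Klein--Gordon-type symbols $c_t,s_t^0,s_t^j$ is correct and is precisely how the paper reads the structure of $\mu_t$. For the general boundedness on $X$, the paper does not go componentwise but instead observes directly that $\mu_t\in S^0_{0,0}(\rd,\mcn)$ and invokes the vector-valued multiplier Propositions~\ref{fou mult mod} and~\ref{fou mult was}; your route via scalar multipliers is equivalent in content.

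There is, however, one genuine gap. You claim that the $W^{p,q}_{r,s}$-estimate follows from the $M^{p,q}_{r,s}$-estimate ``by the Fourier conjugacy between the two scales''. This does not work as stated: since $\|\mu(D)f\|_{W^{p,q}_{r,s}}=\|\mu\cdot\hat f\|_{M^{p,q}_{r,s}}$, the Fourier transform converts the Fourier multiplier into a \emph{pointwise} multiplier, whose boundedness on $M^{p,q}_{r,s}$ is a different (dual) condition and is not implied by boundedness of $\mu(D)$ on $M^{p,q}_{r,s}$. The paper avoids this by treating the two cases separately: for $W^{p,q}_{r,s}$ one uses the convolution structure of Wiener amalgam spaces (Proposition~\ref{fou mult was}), which requires $\mu\in M^{\infty,1}_{0,|s|}$, a consequence of $\mu_t\in S^0_{0,0}$ together with Proposition~\ref{S0 in sjo}.

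For the explicit bound~\eqref{C(t) estimate free} your approach differs from the paper's. You propose to import the Klein--Gordon multiplier estimates from \cite{benyi unimod 2,cn wave 3,wang paper,wang book} for $e^{\pm 2\pi it\omega(\xi)}$ and then absorb the time-independent amplitudes $\xi_j/\omega(\xi)$, $m/\omega(\xi)$ as $S^0_{0,0}$ factors; this is legitimate and yields the correct exponent. The paper instead gives a self-contained argument: it uses the frequency-uniform decomposition norm~\eqref{mod disc norm}, applies the Bernstein-type Lemma~\ref{bernstein lem} to bound $\|\sigma_{k+\ell}\mu_t\|_{\cF L^1(\mcn)}\lesssim(1+|t|)^{d/2}$ uniformly in $k$, and then interpolates with the $L^2(\cn)$ conservation law to reach $(1+|t|)^{d|1/2-1/p|}$. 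The paper's route is more direct and makes transparent that no hidden time growth enters from the amplitudes or the weights, which is exactly the obstacle you flagged.
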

While the results are not unexpected in themselves in view of the discussion above on the connection with the Klein-Gordon propagator, we remark that our method improves the known estimates in two aspects. First, we are able to cover weighted modulation and Wiener amalgam spaces with no extra effort, resulting in a more precise description of the action of the propagator (no loss of derivatives in Theorem \ref{maint free} or asymptotic smoothing in Theorem \ref{smoothing est} below). On the other hand, at least for modulation spaces we are able to explicitly characterize the time-dependence of the constant $C(t)$ in \eqref{C(t) estimate free} in a straightforward way, essentially by inspecting the symbol \eqref{symb intro}.

The second purpose of this note is to provide boundedness results on modulation and Wiener amalgam spaces for suitable potentials $V$ in \eqref{dirac eq}. We relax the regularity assumptions in \cite{kn} in two aspects. First, we replace the multiplication operator by $V$ with a genuine matrix pseudodifferential operator $\sigma^{\w}$ in the Weyl form, where the matrix symbol $\sigma$ belongs to the so-called Sj\"ostrand class \cite{sjo}. In the ordinary scalar-valued framework this is a prime example of an exotic symbol class still yielding bounded Weyl operators on $L^2(\rd)$. A closer inspection reveals that this function space is nothing but the modulation space $M^{\infty,1}$ and it is well known that symbols in this space associate with bounded operators on any (unweighted) modulation space \cite{gro sjo}. This characterization also extends to operator-valued symbols on Hilbert-valued modulation spaces \cite{wahlberg}. 
In addition, while the dependence on the time of the potential $V$ is assumed to be smooth in \cite{kn}, we require here a milder condition, namely continuity for the narrow convergence; see Definition \ref{nar conv} for a precise characterization. In the following claim we use the spaces $\cM^{p,q}_{r,s}$ and $\cW^{p,q}_{r,s}$ defined as the closure of the Schwartz class in the corresponding modulation and Wiener amalgam spaces respectively.

\begin{theorem} \label{maint pert}
Let $1\le p,q \le \infty$, $\gamma \ge0$ and $r,s \in \bR$ be such that $|r|+|s|\le \gamma$; denote by $X$ any of the spaces $\cM^{p,q}_{r,s}(\rd,\cn)$ or $\cW^{p,q}_{r,s}(\rd,\cn)$. Let $T>0$ be fixed and assume the map $[0,T]\ni t \mapsto \sigma(t,\cdot) \in M^{\infty,1}_{0,2\gamma}(\rd,\mcn)$, to be continuous for the narrow convergence. For any $\psi_0 \in X$ there exists a unique solution $\psi \in C([0,T],X)$ to \eqref{dirac eq} with $V=\sigma(t,\cdot)^\w$. The corresponding propagator is bounded on $x$.
\end{theorem}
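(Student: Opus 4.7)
The proof should proceed by a Duhamel-based fixed point argument on $C([0,T],X)$. Writing $U(t)=e^{-it\Dm}$ for the free propagator and translating \eqref{dirac eq} (with $V=\sigma(t,\cdot)^\w$) into the integral form
\[
\psi(t)=U(t)\psi_0 - i\int_0^t U(t-\tau)\,\sigma(\tau,\cdot)^\w\psi(\tau)\,d\tau,
\]
we aim to realize $\psi$ as the fixed point of the map $\Phi\psi(t):=U(t)\psi_0 - i\int_0^t U(t-\tau)\sigma(\tau,\cdot)^\w\psi(\tau)\,d\tau$. The two structural bounds we need are already available in the paper: Theorem \ref{maint free} gives boundedness of $U(t)$ on $X$ with a constant $C_X(t)$ locally bounded in $t$; and the vector-valued extension of Gr\"ochenig–Sj\"ostrand's theorem (developed earlier in the paper, following \cite{gro sjo,wahlberg}) furnishes a constant $C>0$ such that any symbol $\tau\in M^{\infty,1}_{0,2\gamma}(\rdd,\mcn)$ defines a Weyl operator bounded on every weighted modulation/Wiener amalgam space with weight parameters satisfying $|r|+|s|\le\gamma$, with operator norm dominated by the symbol norm.

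I would next verify two ingredients ensured by the narrow-continuity hypothesis on $[0,T]\ni t\mapsto\sigma(t,\cdot)$. First, a uniform bound $M:=\sup_{t\in[0,T]}\lV\sigma(t,\cdot)\rV_{M^{\infty,1}_{0,2\gamma}}<\infty$, which should follow from compactness of $[0,T]$ together with a uniform boundedness principle applied to the short-time Fourier coefficients that realize the $M^{\infty,1}_{0,2\gamma}$-norm (this is in fact built into the very definition of narrow convergence, cf.\ Definition \ref{nar conv}). Second, the strong continuity of $t\mapsto\sigma(t,\cdot)^\w\varphi$ in $X$ for each fixed $\varphi\in\cS(\rd,\cn)$, extended by density (here one uses crucially that $X$ is the Schwartz closure $\cM^{p,q}_{r,s}$ or $\cW^{p,q}_{r,s}$) together with the uniform operator bound $M$ to all $\varphi\in X$. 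Combined with strong continuity of $t\mapsto U(t)\varphi$ on $X$, this guarantees that the Bochner integral defining $\Phi\psi$ makes sense in $X$ and that $\Phi$ maps $C([0,T],X)$ into itself.

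At this point the Picard argument is standard: setting $K(T_0):=\sup_{|t|\le T_0}C_X(t)$, one obtains
\[
\lV\Phi\psi(t)-\Phi\varphi(t)\rV_X \le C\, K(T_0)\, M\, T_0\,\lV\psi-\varphi\rV_{C([0,T_0],X)},
\]
which is a strict contraction for $T_0$ small enough depending only on $M$, $K(T_0)$, and the Sj\"ostrand constant $C$. Since the contraction radius depends on data independent of the subinterval, one concatenates finitely many such intervals to cover $[0,T]$, yielding existence and uniqueness in $C([0,T],X)$. The operator norm estimate for the propagator then follows from the a priori inequality
\[
\lV\psi(t)\rV_X \le K(T)\lV\psi_0\rV_X + C\, K(T)\, M \int_0^t \lV\psi(\tau)\rV_X\,d\tau
\]
and Gr\"onwall's lemma, giving $\lV\psi(t)\rV_X\le K(T)\,e^{CK(T)MT}\lV\psi_0\rV_X$.

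The main technical obstacle, in my view, is not the fixed-point machinery itself but the careful use of the narrow-convergence hypothesis to secure both the uniform symbol bound $M$ and the strong continuity of $t\mapsto\sigma(t,\cdot)^\w$ as a family of operators on $X$. This is precisely the minimal regularity in $t$ needed to run a Bochner-valued Duhamel argument while avoiding the smoothness assumptions of \cite{kn}; everything else is structural and reduces, once the vector-valued Sj\"ostrand-type boundedness is in hand, to a standard contraction together with a Gr\"onwall estimate.
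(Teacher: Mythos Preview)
Your proposal is correct and follows essentially the same Duhamel/Volterra strategy as the paper, relying on the same three ingredients: boundedness of the free propagator on $X$ (Theorem \ref{maint free}), the vector-valued Sj\"ostrand--Weyl bound (Theorem \ref{bound weyl}), and strong continuity of $t\mapsto\sigma(t,\cdot)^\w$ on $X$ granted by narrow convergence (Theorem \ref{nar conv weyl}). The only cosmetic difference is that the paper solves the Volterra equation via the Dyson--Phillips series $U(t)=\sum_n U_n(t)$ (whose $t^n/n!$ decay gives convergence directly on all of $[0,T]$), whereas you run a contraction on short subintervals and concatenate; both are standard and interchangeable, and uniqueness via Gr\"onwall is handled identically.
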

%
We then consider the case of potentials with quadratic and sub-quadratic growth as in \cite{kn}. As a consequence of a useful splitting lemma, namely Proposition \ref{sj decomp} below, we are able to prove a generalized rough counterpart of the smooth scenario considered in \cite[Thm. 1.2]{kn}. In particular, the potential contains non-smooth functions with a certain number of derivatives in the Sj\"ostrand class plus a perturbation in the Weyl form. 

\begin{theorem} \label{maint split mod}
	Let $1\le p \le \infty$ and $\psi_0 \in \cM^p(\cn)$. Consider the Cauchy problem \eqref{dirac eq} with potential 
	\begin{equation}\label{V maint split} V=Q I_n+L+\sigma^{\w},\end{equation}
	where \begin{itemize}
		\item $Q:\rd \to \bC$ is such that $\partial^{\alpha}Q \in M^{\infty,1}(\rd)$ for $\alpha \in \bN^d$, $|\alpha|=2$,
		\item $L:\rd \to \mcn $ is such that $\partial^{\alpha}L \in M^{\infty,1}(\rd,\mcn)$ for $\alpha \in \bN^d$, $|\alpha|=1$, and		
		\item $\sigma \in M^{\infty,1}(\rd,\mcn)$.

	\end{itemize}
	 For any $t\in\bR$ there exists a constant $C(t)>0$ such that the solution $\psi$ of \eqref{dirac eq} satisfies 
	\[ 
	\lV \psi(t,\cdot) \rV_{\cM^p} \le C(t) \lV\psi_0 \rV_{\cM^p}. 
	\] 
	Furthermore, if $V$ is as in \eqref{V maint split} and $Q=0$, then for any $1\le p,q\le \infty$ and $t\in\bR$ there exists a constant $C(t)>0$ such that the solution $\psi$ of \eqref{dirac eq} satisfies
	\[ 
	\lV \psi(t,\cdot) \rV_{\cM^{p,q}} \le C(t) \lV\psi_0 \rV_{\cM^{p,q}}. 
	\] 
\end{theorem}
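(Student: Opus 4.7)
The plan is to exploit the splitting lemma (Proposition \ref{sj decomp}) to decompose $V$ into a smooth polynomial potential of the kind handled in \cite[Thm.~1.2]{kn} plus a matrix-valued Weyl perturbation with symbol in the Sj\"ostrand class, for which the perturbative machinery of Theorem \ref{maint pert} applies.

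Applying Proposition \ref{sj decomp} componentwise, the hypothesis $\partial^{\alpha}Q \in M^{\infty,1}(\rd)$ for $|\alpha|=2$ yields a decomposition $Q = Q_2 + Q_0$, where $Q_2$ is a quadratic polynomial and $Q_0 \in M^{\infty,1}(\rd)$; likewise, $L = L_1 + L_0$ with $L_1$ an affine $\mcn$-valued polynomial and $L_0 \in M^{\infty,1}(\rd,\mcn)$. Setting $W := Q_0 I_n + L_0 + \sigma \in M^{\infty,1}(\rd,\mcn)$ and viewing it as a $\xi$-independent Weyl symbol, one obtains the clean splitting
\[
V = (Q_2 I_n + L_1) + W^{\w}.
\]

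Next I introduce the propagator $U_0(t)$ of the reduced smooth problem $i\partial_t \varphi = (\Dm + Q_2 I_n + L_1)\varphi$. The potential $Q_2 I_n + L_1$ is smooth with scalar quadratic plus matrix-valued linear growth, which is exactly the setting of \cite[Thm.~1.2]{kn}; hence $U_0(t)$ is bounded on $\cM^{p}(\rd,\cn)$ with constant $C_0(t)$, and, when $Q=0$, on $\cM^{p,q}(\rd,\cn)$ for every $1\le p,q \le \infty$. I then solve the full equation by the Duhamel representation
\[
\psi(t) = U_0(t)\psi_0 - i\int_0^t U_0(t-s)\, W^{\w}\psi(s)\,ds
\]
via a standard Picard iteration. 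Since $W$ is time-independent, the operator $W^{\w}$ is bounded on every $\cM^{p,q}(\rd,\cn)$ uniformly in $t$ (this is the time-independent specialization of Theorem \ref{maint pert} with $\gamma=0$). Composing this with the bound on $U_0$ and applying Gronwall's inequality to $\lV\psi(t)\rV_{\cM^{p}}$, respectively $\lV\psi(t)\rV_{\cM^{p,q}}$ when $Q=0$, produces the claimed estimate with $C(t)$ of the form $C_0(t)\exp(\kappa\int_0^t C_0(t-s)\,ds)$.

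The main obstacle is the simultaneous bookkeeping: one must verify that the splitting can be carried out componentwise on the scalar $Q$ and on the matrix-valued $L$ in such a way that all the polynomial remainders combine into a single smooth Dirac Hamiltonian of the type handled by \cite{kn}, while the Sj\"ostrand-class residues can be absorbed together with $\sigma$ into a single matrix-valued $M^{\infty,1}$-symbol on $\rd$. Once this reduction is in place the vector-valued time-frequency machinery developed earlier in the paper ensures the boundedness of $W^{\w}$, and the Duhamel/Gronwall step is routine; the distinction between the two statements of the theorem is ultimately dictated by whether the smooth part has quadratic ($Q\ne 0$, forcing $p=q$) or merely sub-quadratic ($Q=0$, allowing general $\cM^{p,q}$) growth.
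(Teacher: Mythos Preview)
Your approach coincides with the paper's: apply Proposition~\ref{sj decomp} to $Q$ and to $L$, group the smooth parts with $\Dm$ into a Hamiltonian covered by \cite[Thm.~1.2]{kn}, and treat the $M^{\infty,1}$ residues together with $\sigma$ as a bounded perturbation. The paper invokes \cite[Cor.~1.5]{engel} for the last step, which is precisely your Duhamel/Gronwall argument in packaged form.

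One correction is needed, however: Proposition~\ref{sj decomp} does \emph{not} produce a polynomial part. The smooth piece it yields lies in $C^{\infty}_{\ge k}(\rd,E)$, i.e.\ smooth functions whose derivatives of order $\ge k$ are bounded; in the proof $f_1=\chi(D)f$ is a frequency-localized version of $f$, not a Taylor polynomial. Thus your $Q_2$ is not a quadratic polynomial but a $C^{\infty}_{\ge 2}$ function, and your $L_1$ is a $C^{\infty}_{\ge 1}(\rd,\mcn)$ function rather than an affine map. This misreading does not break the argument, since $C^{\infty}_{\ge 2}$ (respectively $C^{\infty}_{\ge 1}$) functions have at most quadratic (respectively linear) growth and are exactly the smooth potentials handled by \cite[Thm.~1.2]{kn}; but you should state the output of the splitting lemma correctly.
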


In the last part of the paper we study the local well-posedness for the nonlinear setting, namely
\begin{equation}\label{dirac eq nonlin}
\begin{cases}
i\partial_{t}\psi\left(t,x\right)=\Dm\psi(t,x)+ F(\psi(t,x)),\\
\psi\left(0,x\right)=\psi_{0}\left(x\right), 
\end{cases} \qquad \left(t,x\right)\in\mathbb{R}\times\mathbb{R}^{d},
\end{equation}
where the nonlinear term $F$ considered below comes in the form of a vector-valued real-analytic entire function $F:\cn \to \cn$ such that $F(0)=0$, i.e. 
\begin{equation} \label{gen nonlin} F_j(z)= \sum_{\alpha,\beta \in \bN^n} c^j_{\alpha,\beta} z^\alpha \bar{z}^\beta, \qquad j=1,\ldots,n. \end{equation} We remark that this general choice includes nonlinearities of power type, such as
\begin{equation}\label{power nonlin} F(\psi)= |\psi|^{2k}\psi,\qquad k \in \bN; \end{equation}
and the cubic nonlinearity known as the Thirring model, namely
\begin{equation}\label{thirring mod} F(\psi)= (\alpha_0\psi,\psi) \alpha_0 \psi; \end{equation}
The choice of even powers in \eqref{power nonlin} and entire functions as in \eqref{gen nonlin} are standard in the context of modulation and amalgam spaces, because of the Banach algebra property enjoyed by certain spaces of these families \cite{sugi nonlin}. On the other hand, the nonlinear spinor field appearing in the Thirring model has been largely investigated; cf. for instance  \cite{bejenaru,huh 1,machihara 1,naumkin}, also in view of its physical relevance - it is a model for self-interacting Dirac fermions in quantum field theory \cite{soler,thirring}. 

The main result in this respect reads as follows.
\begin{theorem}\label{maint nonlin}
	Let $1\le p \le \infty$ and $r,s\ge0$; denote by $X$ any of the spaces $M^{p,1}_{0,s}(\rd,\cn)$ or $W^{1,p}_{r,s}(\rd,\cn)$. If $\psi_0 \in X$ then there exists $T = T(\lV \psi_0 \rV_X)$ such that the Cauchy problem \eqref{dirac eq nonlin} with $F$ as in \eqref{gen nonlin} has a unique solution $\psi \in C^0([0,T],X)$. 
\end{theorem}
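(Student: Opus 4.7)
The natural plan is a Banach contraction argument applied to the Duhamel formulation of \eqref{dirac eq nonlin}, namely
\[
\Phi(\psi)(t) = U(t)\psi_0 - i\int_0^t U(t-\tau)\, F(\psi(\tau))\, d\tau,
\qquad U(t)=e^{-it\Dm}.
\]
I would look for a fixed point of $\Phi$ in the closed ball
\[
B_R^T = \{ \psi \in C^0([0,T],X) : \lV \psi \rV_{C^0([0,T],X)} \le R\}
\]
for suitable $R>0$ and $T>0$ depending on $\lV\psi_0\rV_X$. The two ingredients needed are a uniform bound on the free propagator on the time interval $[0,T]$ and a locally Lipschitz estimate for the nonlinear term $F$ on $X$; both will be combined in the standard way.

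For the first ingredient, Theorem \ref{maint free} applied to the (unweighted and weighted) vector-valued modulation and Wiener amalgam spaces provides a constant $C_X(t)$ that is continuous in $t$, and hence bounded by some $K_T>0$ on $[0,T]$. Moreover $U(t)$ leaves the closed subspaces $\cM^{p,q}_{r,s}$ and $\cW^{p,q}_{r,s}$ invariant by density of the Schwartz class, so there is no obstruction in working with the closures that are needed in order to deal with pointwise products when $p=\infty$ or $q=\infty$. The map $t\mapsto U(t)\psi_0$ is continuous into $X$ by strong continuity of the propagator, which follows from the symbol \eqref{symb intro} being continuous in $t$ with values in the multipliers of $X$.

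The heart of the proof is the nonlinear estimate. The spaces $X$ in the statement are designed precisely so that pointwise multiplication is continuous: indeed $M^{p,1}_{0,s}(\rd,\cn)$ and, via the Fourier transform, $W^{1,p}_{r,s}(\rd,\cn)$ are Banach algebras under componentwise product for $s,r\ge 0$, with a norm estimate of the form $\lV fg \rV_X \lesssim \lV f\rV_X \lV g\rV_X$ (the scalar case is classical; the vector-valued extension follows componentwise and matches the framework of \cite{wahlberg}). Applying this inductively to monomials $\psi^\alpha\bar\psi^\beta$ and summing against the coefficients $c^j_{\alpha,\beta}$ of the real-analytic $F$ with $F(0)=0$, one obtains
\[
\lV F(\psi) \rV_X \le G(\lV \psi\rV_X)\, \lV\psi\rV_X,
\]
with $G$ continuous, increasing, and $G(0)$ finite, provided the power series of $F$ has infinite radius of convergence (which is what is asked in \eqref{gen nonlin}). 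In the same way, writing
\[
F(\psi)-F(\varphi) = \sum_{\alpha,\beta} c^j_{\alpha,\beta}\bigl(\psi^\alpha\bar\psi^\beta - \varphi^\alpha\bar\varphi^\beta\bigr)
\]
and telescoping each monomial yields the local Lipschitz bound
\[
\lV F(\psi)-F(\varphi)\rV_X \le \tilde G(R)\, \lV \psi-\varphi\rV_X
\]
uniformly for $\psi,\varphi$ in $B_R^T$, again with $\tilde G$ continuous in $R$.

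Putting these together, one obtains $\lV \Phi(\psi)\rV_{C^0([0,T],X)} \le K_T \lV\psi_0\rV_X + T\, K_T\, G(R)\,R$ and a contraction constant $T\,K_T\,\tilde G(R)$. Choosing first $R = 2K_T\lV\psi_0\rV_X$ and then $T$ small enough (depending only on $\lV\psi_0\rV_X$ through $R$), both the stability of the ball $B_R^T$ and the contraction property hold, and the Banach fixed-point theorem produces a unique $\psi \in C^0([0,T],X)$ solving the Duhamel equation, hence \eqref{dirac eq nonlin}. The main obstacle here is verifying the Banach-algebra estimate on the vector-valued weighted spaces $M^{p,1}_{0,s}$ and $W^{1,p}_{r,s}$ and the associated convergence of the series \eqref{gen nonlin}; once one is allowed to reduce componentwise and invokes the scalar algebra property via the characterization through the short-time Fourier transform, the rest of the argument is the standard contraction scheme.
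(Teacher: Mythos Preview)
Your proposal is correct and follows essentially the same route as the paper: Duhamel formulation, the propagator bound from Theorem~\ref{maint free}, the Banach-algebra property of $M^{p,1}_{0,s}$ and $W^{1,p}_{r,s}$ for the nonlinear estimate, and a contraction argument. The only cosmetic differences are that the paper packages the fixed-point step into the abstract scheme of Theorem~\ref{abs nonlin} rather than invoking Banach's theorem directly, and obtains the Lipschitz bound in Lemma~\ref{nonlin gen lemma} via the integral identity $F_j(\psi)-F_j(\phi)=\int_0^1 \frac{d}{dt}F_j(t\psi+(1-t)\phi)\,dt$ instead of your telescoping; neither changes the substance of the argument.
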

We conclude this introduction by emphasizing a few aspects that may be further developed in the context of modulation spaces, such as Strichartz estimates and perturbations due to a  magnetic field, i.e. the Dirac operator in \eqref{freediracop} becomes $\cD_{m,A} = 2\pi m \alpha_0 - i \sum_{j=1}^{d} \alpha_j (\partial_j - iA_j)$, where $A(x)=(A_1(x),\ldots,A_d(x))$, $x \in \rd$, is a static magnetic potential. We also point out that more general nonlinear terms could be considered, for instance as in the Soler model \cite{soler} and other interactions arising in condensed matter; cf.  \cite{pelinovsky} for the state of the art in 1+1 dimensions. 

\section{Preliminaries}

\subsection{Notation} We define $t^2=t\cdot t$, for $t\in\mathbb{R}^d$, and
$x\cdot y$ is the scalar product on $\mathbb{R}^{d}$. The Schwartz class is denoted by  $\mathcal{S}(\mathbb{R}^{d})$, the space of temperate distributions by  $\mathcal{S}'(\mathbb{R}^{d})$. The brackets  $\langle  f,g\rangle $ denote the extension to $\cS' (\mathbb{R}^{d})\times \cS (\mathbb{R}^{d})$ of the inner product $\langle f,g\rangle=\int f(t){\overline {g(t)}}dt$ on $L^2(\mathbb{R}^{d})$. 

The characteristic function on a set $A\subseteq E$ is denoted with $\chi_{A}$. For $x=(x_1,\ldots,x_d) \in \rd$ we set $|x|_{\infty} = \max\{|x_1|,\ldots,|x_d|\}$. 

The conjugate exponent $p'$ of $p \in [1,\infty]$ is defined by $1/p+1/p'=1$. The symbol $\lesssim$ means that the underlying inequality holds up to a positive constant factor $C>0$:
$$ f\lesssim g\quad\Rightarrow\quad\exists C>0\,:\,f\le Cg. $$ We write $f\asymp g$ to say that both $f \lesssim g$ and $g\lesssim f$ hold. 

We choose the following normalization for the Fourier transform:
\[
\mathcal{F}f\left(\xi\right)=\hat{f}(\xi)=\int_{\mathbb{R}^{d}}e^{-2\pi ix\cdot \xi}f\left(x\right) d x,\qquad\xi\in \rd.
\] 
We define the involution $^{*}$ as $f^*(t)=\overline{f(-t)}$. 
For any $x,\xi \in \mathbb{R}^{d}$, the modulation $M_{\xi}$ and translation $T_{x}$ operators are defined as 
\[
M_{\xi}f\left(t\right)= e^{2\pi it \cdot \xi}f\left(t\right),\qquad T_{x}f\left(t\right)= f\left(t-x\right).
\]
For $m>0$ and $t \in \rd$ we set $\la \xi \ra_m \coloneqq \sqrt{m^2 + \xi^2}$. We omit the subscript for $m=1$, namely $\la \xi \ra$ stands for $\la \xi \ra_1$.
Denote by $J$ the canonical symplectic matrix in $\mathbb{R}^{2d}$:
\[
J=\left(\begin{array}{cc}
0_d & I_d\\
-I_d & 0_d
\end{array}\right).
\] \\  
In what follows we always denote by $E$ a complex Banach space with norm $|\cdot|_E$, whereas the symbol $H$ is reserved for a complex separable Hilbert space. The topological dual space of $E$ is denoted by $E'$. The brackets $(\cdot,\cdot)$ are used for the duality between $E'$ and $E$ and in particular for the inner product in $H$ - we assume $(\cdot,\cdot)$ to be conjugate-linear in the second argument. Given two normed spaces $X$ and $Y$, the space of continuous linear operators $X\to Y$ with the topology of bounded convergence is denoted by $\cL(X,Y)$, whereas we write $\cL_s(X,Y)$ for the same set endowed with the strong operator topology. The space of trace-class operators on $H$ is denoted by $\cL^1(H)$. \\ 
The space of smooth $E$-valued functions with bounded derivatives of any order larger than $k \in \bN$ is 
\[ C^{\infty}_{\ge k}(\rd,E) \coloneqq \left\{f \in C^{\infty}(\rd,E) \,:\, |\partial^{\alpha}f| \le C_{\alpha} \quad \forall \alpha\in \bN^d, |\alpha|\ge k \right\}. \] 
Notice that $C^{\infty}_{\ge 0}(\rd)$ coincides with the well-known H\"ormander class $\S0(\rd)$ \cite{gro rze,horm 3}. 
We will occasionally make use of the Dirac notation for projection operators: given $\phi,\psi \in H$, we define 
\[ |\psi\ra \la \phi| : H \to H,\quad  |\psi\ra \la \phi|(w) = (w,\phi)_H \psi.\]\\ 
Given a triple $E_1$, $E_2$ and $E_3$ of complex Banach spaces, we say that the map \[ \bullet : E_1 \times E_2 \to E_3, \quad (x_1,x_2) \mapsto x_3= x_1\bullet x_2 \] is a \textit{multiplication} \cite{amann book 3} if it is a continuous bilinear operator such that $\lV \bullet \rV_{\cL(E_1\times E_2,E_3)} \le 1$. The following are common examples of multiplications that will be used below: 
\begin{enumerate}
	\item multiplication with scalars: $\bC \times E \to E$, $(\lambda,x) \mapsto \lambda x$;
	\item the duality pairing: $E' \times E \to \bC$, $(u,x) \mapsto u(x)$;
	\item the evaluation map: $\cL(E_1,E_2) \times E_1 \to E_2$, $(T,x) \mapsto Tx$;
	\item multiplication in a Banach algebra.
\end{enumerate}

Although neither the concrete expressions of the Dirac matrices nor deep aspects related to the Clifford algebra representation theory are relevant for our purposes, we point out that the conditions \eqref{dirac matrix def} force $n$ to be even and we may assume without loss of generality that
\[ \alpha_0 = \left(\begin{array}{cc} I_{n/2} & 0\\ 0 & -I_{n/2} \end{array}\right).  \]
We refer the interested reader to \cite{kalf,ozawa} for further details.
 
\subsection{Vector-valued function spaces and operators}
The notation and the basic results of analysis on infinite-dimensional spaces are rather standard \cite{amann book 3,girardi class 1,hytonen book 1} and we will not linger over the subtleties arising from the infinite-dimensional context. For the convenience of the reader we briefly collect the main facts of harmonic analysis in the vector-valued context. In what follows we consider functions $f:\rd \rightarrow E$, where $\rd$ is provided with the Lebesgue measure $\mu_L$. \\

The family of \textit{Lebesgue-Bochner spaces} is the natural analogue of Lebesgue spaces of scalar-valued functions. When there is no risk of confusion, we will write $L^p_s(E)$ for $L^p_s(\rd,E)$ and $L^p(E)$ when $s=0$. Notice that $f=(f_1,\ldots,f_n) \in L^p_s(\rd,\cn)$ if and only if $f_j \in L^p_s(\rd)$ for any $j=1,\ldots,n$. 
Most of the usual properties from the scalar-valued case extend in a natural way (with the remarkable exception of duality \cite{hytonen book 1}). 
\begin{proposition}[{\cite{amann book 3,hytonen book 1}}] $(i)$ For any $1\le p \le \infty$, $L^p(\rd,E)$ is a Banach space with the norm $\lV f \rV_{L^p(\rd,E)} = \lV \lv f(\cdot) \rv_{E} \rV_{L^p}$. \\ 
	$(ii)$ $L^2(\rd,H)$ is a Hilbert space with inner product given by \[ \la f,g \ra_{L^2(H)} = \int_{\rd} ( f(t),g(t) )_H dt. \] 
	$(iii)$ (H\"older inequality) Given a multiplication $\bullet : E_1 \times E_2 \to E_3$, $s_1,s_2 \in \bR$ and $1\le p_1,p_2,p \le \infty$ such that $1/p_1+1/p_2=1/p$, if $f\in L^{p_1}_{s_1}(\rd,E_1)$ and $g\in L^{p_2}_{s_2}(\rd,E_2)$ then $f\bullet g \in L^p_{s_1+s_2}(\rd,E_3)$ and $ \lV f \bullet g \rV_{L^p_{s_1+s_2}(E_3)} \le \lV f \rV_{L^{p_1}_{s_1}(E_1)} \lV g \rV_{L^{p_2}_{s_2}(E_2)}$. 
\end{proposition}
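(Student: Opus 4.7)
The overall strategy is to pass from $E$-valued functions to the scalar functions $t \mapsto |f(t)|_E$ and then import the corresponding scalar-valued results, leaning on strong measurability to ensure these auxiliary scalar functions are measurable in the usual sense.

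For part (i), the norm axioms on $L^p(\rd,E)$ transfer pointwise from $|\cdot|_E$; the only nontrivial step is completeness, which I would prove by a Riesz--Fischer argument. Given a Cauchy sequence $\{f_k\}$ in $L^p(\rd,E)$, extract a subsequence with $\sum_k \lV f_{k+1}-f_k \rV_{L^p(E)} < \infty$ and apply the scalar monotone convergence theorem to $t \mapsto \sum_k |f_{k+1}(t)-f_k(t)|_E$. This series is a.e.\ finite, so completeness of $E$ yields pointwise a.e.\ convergence of the partial sums of $\{f_k\}$ to a strongly measurable limit $f:\rd \to E$; a standard Fatou/dominated-convergence step shows $f \in L^p(\rd,E)$ with $f_k \to f$ in norm. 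Part (ii) reduces to verifying that $\la f,g\ra_{L^2(H)} = \int (f(t),g(t))_H\,dt$ is well defined and sesquilinear: integrability of the integrand follows from pointwise Cauchy--Schwarz in $H$ combined with the scalar Cauchy--Schwarz in $L^2$, and the induced norm coincides with the Bochner norm, so completeness is inherited from (i).

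Part (iii) is the substantive one. The core is the pointwise estimate
\[ |f(t)\bullet g(t)|_{E_3} \le \lV \bullet \rV_{\cL(E_1\times E_2,E_3)} |f(t)|_{E_1} |g(t)|_{E_2} \le |f(t)|_{E_1}|g(t)|_{E_2}, \]
where the last step uses the hypothesis $\lV \bullet \rV \le 1$ that is built into the notion of multiplication. Factoring the weight as $(1+|t|^2)^{(s_1+s_2)/2} = (1+|t|^2)^{s_1/2} (1+|t|^2)^{s_2/2}$ and taking the $L^p$ norm, the scalar Hölder inequality with exponents satisfying $1/p_1+1/p_2 = 1/p$ delivers the claimed bound in one line. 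The only preliminary check is that the composition $t \mapsto f(t)\bullet g(t)$ is strongly measurable as an $E_3$-valued map, which follows from continuity of $\bullet$ and strong measurability of $f$ and $g$ by approximating each factor with simple functions and passing to the limit.

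Expected main obstacle: honestly, there is none. Each assertion is a faithful transplant of its scalar counterpart, and completeness of $E$ enters exactly once, in the Riesz--Fischer argument for part (i). The only genuine subtlety of the Bochner framework is measurability, which becomes routine once one consistently works with strongly measurable representatives and uses that $|\cdot|_E$ and $\bullet$ are continuous.
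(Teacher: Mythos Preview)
Your argument is correct and follows the standard textbook route: reduce each assertion to its scalar analogue via $t \mapsto |f(t)|_E$, use strong measurability to guarantee that all auxiliary scalar functions are measurable, and invoke completeness of $E$ only in the Riesz--Fischer step. The pointwise estimate $|f\bullet g|_{E_3} \le |f|_{E_1}|g|_{E_2}$ together with the multiplicative splitting of the weight is exactly how (iii) should be handled.

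There is nothing to compare against, however: the paper does not supply its own proof of this proposition. It is stated as a background fact with a citation to the monographs of Amann and of Hyt\"onen--van Neerven--Veraar--Weis, and the paper moves on immediately. So your write-up is not an alternative to the paper's argument but rather a self-contained justification of a result the paper takes as given. If anything, you have done more than the paper asks; for the purposes of the present work it would suffice to cite the same references.
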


\noindent
\textbf{Distributions and Fourier transform (\cite{amann book 3,hytonen book 1}).} 
Recall that the Schwartz class of $E$-valued rapidly decreasing functions $\cS(\rd,E)$ is a Fr\'echet space with the topology induced by the family of seminorms $\{p_{m,E}\}_{m\in \bN}$, where \[ p_{m,E}(f)\coloneqq \sup_{\substack{t\in\rd \\ |\alpha|+|\beta|<m}} \lv t^{\alpha} \partial^{\beta}f(t)\rv_E < \infty, \] and is a dense subset of $L^p(\rd,E)$ for any $1\le p < \infty$. 

The space of $E$-valued temperate distributions $\cS'(\rd,E)$ consists of bounded (conjugate-)linear maps from $\cS(\rd)$ to $E$, that is $\cS'(\rd,E)= \cL(\cS(\rd),E)$. 

For $1\le p \le \infty$ any $p$-integrable $E$-valued function $f$ can be identified with a $E$-valued temperate distribution as usual: 
\[ \la f,g \ra = \int_{\rd} f(t)\overline{g(t)}dt, \qquad g\in \cS(\rd). \]  Notice that this is a further meaning for the brackets $\la \cdot,\cdot \ra$.

The Fourier transform can be initially defined as a Bochner integral for $f \in L^1(\rd,E)$ and its restriction to $\cS(\rd,E)$ yields a continuous automorphism that enjoys the usual properties (e.g., the Riemann-Lebesgue lemma, the inversion theorem, the relations with translation, modulation and differentiation). There is a notable exception: while $\cF:L^1(\rd,E) \to L^{\infty}(\rd,E)$, the Hausdorff-Young inequality does not hold in general \cite{hytonen book 1}. In particular, it is a deep result by Kwapie\'n \cite{kwap} that the Parseval-Plancherel theorem yields the extension of $\cF$ to a unitary operator on $L^2(\rd,E)$ if and only if $E$ is isomorphic to a Hilbert space.

Nevertheless, the Fourier transform extends to an isomorphism on $\cS'(\rd,E)$ as follows: 
\[ \la \hat{f},\hat{g} \ra \equiv \la f, g \ra, \qquad f \in \cS'(\rd,E),\,g\in \cS(\rd). \] 

For future convenience we define the (Bochner-)Fourier-Lebesgue spaces $\cF L^q_s(\rd,E)$ consisting of distributions $f\in \cS'(\rd,E)$ such that \[ \lV f\rV_{\cF L^q_s(E)} \coloneqq \lV \cF^{-1}f \rV_{L^q_s(E)} < \infty. \]

The following Bernstein-type lemma can be proved just as in the scalar-valued case; cf. \cite[Prop. 1.11]{wang book}.

\begin{lemma}\label{bernstein lem}
	Let $N > d/2$ be an integer and $\partial^k_j f \in L^2(\rd,H)$ for any $j=1,\ldots,d$ and $0 \le k \le N$. Then 
	\begin{equation}\label{bernstein est}
	\lV f \rV_{\cF L^1(H)} \lesssim \lV f \rV_{L^2(H)}^{1-d/2N} \lc \sum_{j=1}^d \lV \partial^N_j f \rV_{L^2(H)} \rc^{d/2N}.
	\end{equation}
\end{lemma}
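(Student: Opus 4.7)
The plan is to adapt the standard scalar-valued Bernstein lemma to the $H$-valued setting. Two observations make this adaptation essentially free of new content: Cauchy--Schwarz is applied only to the real-valued function $\xi \mapsto |\cF^{-1}f(\xi)|_H$, and Plancherel's theorem is available on $L^2(\rd,H)$ by Kwapie\'n's result recalled in the preliminaries, precisely because $H$ is isomorphic to a Hilbert space.

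First I would fix a radius $R>0$ to be optimized later and split the integral defining $\lV f \rV_{\cF L^1(H)} = \lV \cF^{-1}f \rV_{L^1(H)}$ over $\{|\xi|\le R\}$ and $\{|\xi|>R\}$. On the low-frequency region, Cauchy--Schwarz against the characteristic function of the ball together with Plancherel yield
\[
\int_{|\xi|\le R} |\cF^{-1}f(\xi)|_H\, d\xi \lesssim R^{d/2}\, \lV f \rV_{L^2(H)}.
\]
On the high-frequency region I would insert $|\xi|^{-N}\cdot|\xi|^N$ and apply Cauchy--Schwarz once more: the integral $\int_{|\xi|>R}|\xi|^{-2N}\,d\xi$ converges (since $2N>d$) and equals a constant multiple of $R^{d-2N}$, while the factor $\lV |\xi|^N \cF^{-1}f \rV_{L^2(H)}$ is controlled --- via the elementary pointwise bound $|\xi|^{2N}\le d^{N-1}\sum_{j=1}^d \xi_j^{2N}$ (valid because $2N$ is even), the identity $\cF^{-1}(\partial_j^N f)(\xi)=(-2\pi i\xi_j)^N \cF^{-1}f(\xi)$, and Hilbert-valued Plancherel --- by a constant multiple of $\sum_{j=1}^d \lV \partial_j^N f \rV_{L^2(H)}$.

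Combining the two estimates gives
\[
\lV f \rV_{\cF L^1(H)} \lesssim R^{d/2} A + R^{d/2-N} B,
\]
where $A = \lV f \rV_{L^2(H)}$ and $B = \sum_j \lV \partial_j^N f \rV_{L^2(H)}$. The choice $R = (B/A)^{1/N}$ balances the two summands and produces exactly the exponents $1-d/(2N)$ and $d/(2N)$ announced in \eqref{bernstein est}. I do not expect any genuine obstacle: the only point that is truly vector-valued is the invocation of Plancherel on $L^2(\rd,H)$, which is precisely what is guaranteed for Hilbert targets by the preliminaries; everything else is a straightforward transcription of the scalar proof with $|\cdot|$ replaced by $|\cdot|_H$.
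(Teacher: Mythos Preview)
Your proposal is correct and is precisely the standard scalar-valued argument (the one in \cite[Prop.~1.11]{wang book} to which the paper defers) transcribed to the $H$-valued setting; the paper gives no independent proof and simply notes that the scalar proof carries over verbatim, which is exactly what you have verified, correctly isolating Hilbert-valued Plancherel as the one nontrivial ingredient.
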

 
\noindent
\textbf{Convolution and Fourier multipliers.} The convolution of vector-valued functions can be meaningfully defined as soon as the target spaces are provided with a multiplication structure \cite{amann book 3,hytonen book 1}. 
The convolution of $f \in \cS'(\rd,E)$ with a Schwartz function $g \in \cS(\rd)$ is the distribution $f*g \in \cS'(\rd,E)$ such that
\[ \la f*g, \phi\ra \equiv \la f, g^* * \phi\ra, \qquad \forall \phi \in \cS(\rd). \]
In fact, $f*g \in C^{\infty}(\rd,E)$ is a function of polynomial growth together with all its derivatives. Moreover, for $f \in L^p(\rd,E)$ and $g \in L^1(\rd)$ we recover the ordinary convolution
\[ f*g(x) = \int_{\rd} f(x-y)g(y)dy, \]
which is a well-defined Bochner integral for a.e. $x \in \rd$. In particular, $f*g \in L^p(\rd,E)$ with $\lV f*g \rV_{L^p(E)} \le \lV f \rV_{L^p(E)} \lV g \rV_{L^1}$. 
The $\bullet$-convolution $f_1 *_{\bullet} f_2$ of $f_1 \in \cS(\rd,E_1)$ and $f_2 \in \cS'(\rd,E_2)$ can be similarly defined as a smooth $E_3$-valued function for any multiplication $\bullet: E_1 \times E_2 \to E_3$ \cite[Thm. 1.9.1]{amann book 3}.
We state some results that will be needed below. The proofs of more general versions of these facts can be found in \cite[Sec. 1.9]{amann book 3}. See also \cite{kerman}. 

\begin{proposition}\label{convol properties} \begin{enumerate}[label=(\roman*)] 
	\item (Young inequality) Let $1 \le p,q,r \le \infty$ satisfy $1/p + 1/q = 1+ 1/r$ and $s_1,s_2,s_3 \in \bR$ satisfy 
	\[ s_1+s_3 \ge 0, \quad s_2+s_3 \ge 0, \quad s_1+s_2 \ge 0. \] If $f\in L^p_{s_1}(\rd,E_1)$ and $g \in L^q_{s_2}(\rd,E_2)$, then $f*_{\bullet}g \in L^r_{-s_3}(\rd,E_3)$, with 
	\[ \lV f*_{\bullet}g \rV_{L^r_{-s_3}(E_3)} \lesssim \lV f \rV_{L^p_{s_1}(E_1)} \lV g \rV_{L^q_{s_2}(E_2)}. \]
	\item For any $f \in \cS'(\rd,E_1)$ and $g \in \cS(\rd,E_2)$:
	\[ \cF(f *_{\bullet} g) = \hat{f} \bullet \hat{g}. \]
\end{enumerate}
\end{proposition}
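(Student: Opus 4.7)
The plan is to reduce both parts to their classical scalar-valued analogues by exploiting the contractivity $|x_1\bullet x_2|_{E_3}\le|x_1|_{E_1}|x_2|_{E_2}$ of the multiplication. For (i), this yields the pointwise bound
\[|f*_{\bullet}g(x)|_{E_3}\le\int|f(x-y)\bullet g(y)|_{E_3}\,dy\le \bigl(|f(\cdot)|_{E_1}*|g(\cdot)|_{E_2}\bigr)(x),\]
so that taking $L^r_{-s_3}$ norms reduces matters to the scalar-valued weighted Young inequality applied to the nonnegative functions $|f|_{E_1}\in L^p_{s_1}(\rd)$ and $|g|_{E_2}\in L^q_{s_2}(\rd)$.

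For that scalar inequality, writing $w_t(x)=(1+|x|^2)^{t/2}$ I would rearrange
\[w_{-s_3}(x)(\phi*\psi)(x)=\int K(x,y)\,[w_{s_1}\phi](x-y)\,[w_{s_2}\psi](y)\,dy,\qquad K(x,y)=\bigl[w_{s_3}(x)\,w_{s_1}(x-y)\,w_{s_2}(y)\bigr]^{-1},\]
and show that $K$ is uniformly bounded by a constant. The hypotheses $s_i+s_j\ge 0$ force at most one of $s_1,s_2,s_3$ to be negative (two negative values would immediately violate the relevant sum condition). If all are nonnegative then trivially $K\le 1$; if exactly one is negative---say $s_3<0$, so that $s_1,s_2\ge|s_3|$---then Peetre's inequality $w_{|s_3|}(x)\le 2^{|s_3|/2}w_{|s_3|}(x-y)w_{|s_3|}(y)$ applied with $x=(x-y)+y$ gives $K(x,y)\lesssim w_{|s_3|-s_1}(x-y)\,w_{|s_3|-s_2}(y)\le 1$, and the two remaining subcases are entirely symmetric. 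The classical unweighted Young inequality applied to the residual factors then delivers the claim.

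For (ii), I would first verify the identity for $f,g\in\cS$ by Fubini: absolute convergence is guaranteed by Schwartz decay, and bilinear continuity of $\bullet$ allows it to be pulled through both integrals, giving
\[\cF(f*_{\bullet}g)(\xi)=\iint e^{-2\pi i(x-y)\cdot\xi}f(x-y)\bullet e^{-2\pi iy\cdot\xi}g(y)\,dx\,dy=\hat f(\xi)\bullet\hat g(\xi).\]
The extension to $f\in\cS'(\rd,E_1)$ then follows by a standard density/duality argument, chasing the distributional definitions: for any $\phi\in\cS(\rd)$ one has $\la\cF(f*_{\bullet}g),\phi\ra=\la f*_{\bullet}g,\hat\phi\ra=\la f,\,g^{*}*_{\bullet}\hat\phi\ra$, which by the Schwartz-case identity applied to $g$ (together with the usual interplay of $\cF^{\pm 1}$, the involution $^{*}$ and convolution) coincides with $\la\hat f\bullet\hat g,\phi\ra$. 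The main technical obstacle in the whole proposition lies in the case analysis controlling $K$ in part (i); part (ii) is essentially a direct computation followed by a routine distributional extension.
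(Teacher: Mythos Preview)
Your argument is correct and in fact supplies considerably more detail than the paper, which does not prove this proposition at all: it simply records the statement and refers the reader to \cite[Sec.~1.9]{amann book 3} and \cite{kerman} for proofs of more general versions. Your reduction of (i) to the scalar weighted Young inequality via the pointwise domination $|f*_\bullet g(x)|_{E_3}\le(|f|_{E_1}*|g|_{E_2})(x)$, followed by the Peetre-type case analysis bounding the kernel $K$, is precisely the standard route (and is essentially the content of the cited references). One small point worth tidying in (ii): in the line $\la f*_{\bullet}g,\hat\phi\ra=\la f,\,g^{*}*_{\bullet}\hat\phi\ra$ the second bracket is not the scalar duality $\cS'(\rd,E_1)\times\cS(\rd)\to E_1$ used elsewhere in the paper, since $g^{*}*\hat\phi$ is $E_2$-valued; you are implicitly using a $\bullet$-induced pairing $\cS'(\rd,E_1)\times\cS(\rd,E_2)\to E_3$. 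This is legitimate but should be made explicit, or alternatively you can avoid it entirely by approximating $f$ by Schwartz functions and passing to the limit, which is equally routine.
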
	


We then introduce the \textit{Fourier multiplier} with symbol $\mu \in \cS'(\rd,E_1)$ as the linear map  
\[ \mu(D)f \coloneqq \cF^{-1}(\mu \bullet \hat f) = \cF^{-1}\mu *_{\bullet} f \in \cS'(\rd,E_3), \] the domain consisting of all $f \in \cS'(\rd,E_2)$ such that the latter convolution is well defined \cite{amann book 3}. 

\subsection{Vector-valued time-frequency analysis} The short-time Fourier transform of a vector-valued distribution $f\in \cS'(\rd,E)$ with respect to a non-zero window function $g \in \cS(\mathbb{R}^{d})$ is defined \cite{wahlberg} as the distribution 
\begin{equation} \label{STFTdef}
V_gf(x,\xi)  \coloneqq \langle f,M_\xi T_x g\rangle.
\end{equation} 
Equivalent representations of $V_gf$ are the following ones, whenever meaningful (assume for instance $f\in L^2(\rd,E)$):
\begin{align}
V_gf(x,\xi) & =\int_{\mathbb{R}^{d}} e^{- 2\pi iy \xi }\,f(y)\, {\overline {g(y-x)}} \,dy 
\\  & =\mathcal{F} (f\cdot \overline{T_x g})(\xi) \\ & = e^{-2 \pi i x \cdot\xi} (f*M_{\xi}g^*)(x) \label{stft convol} \\ & = \la \hat{f}, T_{\xi}M_{-x}\hat{g} \ra \\ & = e^{2\pi i x\cdot \xi}V_{\hat{g}}\hat{f}(\xi,-x) \label{fund id tfa}.
\end{align}
It can be proved \cite[Lem. 2.1]{wahlberg} that $V_gf \in C^{\infty}(\rdd,E)$ and \[
\lv V_gf(x,\xi) \rv_E \leq C(1+ \lvert x \rvert + \lvert \xi \rvert )^N,
\] for some $C>0$, $N\in \bN$ and any $x,\xi \in \rd$. 

\begin{definition}
	Let $1 \le p,q \le \infty$ and $r,s\in \bR$. The $E$-valued modulation space $M^{p,q}_{r,s}(\rd,E)$ consists of distributions $f \in \cS'(\rd,E)$ such that
	\begin{equation}\label{mod sp norm}
	\lVert f \rVert_{M^{p,q}_{r,s}} =  \left( \int_{\rd} \left( \int_{\rd} \lv V_gf(x,\xi) \rv_E^p \la x \ra^{rp} dx \right)^{q/p} \la \xi \ra^{sq} d \xi \right)^{1/q} < \infty, \end{equation} for some $g \in \cS(\rd)$, with suitable modification for $p=\infty$ or $q=\infty$.
\end{definition}


If $r=s=0$ we omit the indices and write $M^{p,q}$. Furthermore, we write $M^p$ for $M^{p,p}$ and $M^{p,q}(E)$ for $M^{p,q}(\rd,E)$ when there is no risk of confusion. We remark that more general weights may be taken into account \cite{wahlberg}.

Most of the ordinary theory extends to the vector-valued context by simply substituting $| \cdot |$ with $|\cdot|_E$ in the proofs. For our purposes, it is enough to mention the following properties. 

\begin{proposition} \label{mod sp prop} Let $1\le p,q \le \infty$ and $r,s\in \bR$. \begin{enumerate}[label=(\roman*)]
		\item $M^{p,q}_{r,s}(\rd,E)$ is a Banach space with the norm \eqref{mod sp norm}, which is independent of the window function $g$ (i.e., different windows yield equivalent norms).
		\item If $p,q < \infty$ the Schwartz class $\cS(\rd,E)$ is dense in $M^{p,q}_{r,s}(\rd,E)$. 
		\item If $p_1\le p_2$, $q_1 \le q_2$ and $r_2\le r_1$, $s_2\le s_1$, then $M^{p_1,q_1}_{r_1,s_1}(\rd,E) \hookrightarrow M^{p_2,q_2}_{r_2,s_2}(\rd,E)$. 
		\item If $E= \bC^{a\times b}$, then $f \in M^{p,q}_{r,s}(\rd,\bC^{a \times b})$ if and only if $f_{ij} \in M^{p,q}_{r,s}(\rd,\bC)$ for any $i=1,\ldots,a$, $j=1,\ldots,b$. 
	\end{enumerate}
\end{proposition}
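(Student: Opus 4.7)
My overall plan is to recycle, essentially verbatim, each standard argument from the scalar-valued modulation-space theory, with the scalar modulus $|\cdot|$ replaced throughout by the Banach-space norm $|\cdot|_E$. The structural observation that makes this work is that the window $g$ is scalar-valued, so although $V_g f$ takes values in $E$, every convolution kernel and polynomial weight that appears in the proofs remains scalar; hence the vector-valued Young-type inequality in Proposition \ref{convol properties}(i) is directly applicable at every step.

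For (i), I would begin with window invariance via a pointwise change-of-window majorization. Given two non-zero windows $g_1,g_2 \in \cS(\rd)$ and a synthesis window $\gamma \in \cS(\rd)$ with $\la \gamma, g_1 \ra \neq 0$, the standard Parseval-type manipulation, which is still valid since the duality pairing is with the scalar class $\cS(\rd)$, yields
\[
\lv V_{g_2} f(x,\xi)\rv_E \le \frac{1}{\lv \la \gamma, g_1 \ra\rv}\, \bigl(\lv V_{g_1} f\rv_E * \lv V_{g_2} \gamma\rv\bigr)(x,\xi)
\]
on $\rdd$, with $*$ denoting convolution on phase space. Since $V_{g_2}\gamma \in \cS(\rdd)$ and the polynomial weights $\la x\ra^r \la \xi \ra^s$ are moderate, Young's inequality on the weighted mixed-norm Lebesgue--Bochner space gives equivalence of the two modulation norms, so the definition is window independent. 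Completeness then follows in the classical way: if $\{f_n\}$ is Cauchy in $M^{p,q}_{r,s}(\rd,E)$, then $\{V_g f_n\}$ is Cauchy in the underlying mixed-norm weighted Lebesgue--Bochner space and converges to some $F$. I would then define $f \in \cS'(\rd,E) = \cL(\cS(\rd),E)$ by the weak reconstruction formula
\[
\la f,\phi\ra \coloneqq \frac{1}{\la g,g\ra}\int_{\rdd} F(x,\xi)\, \overline{V_g \phi(x,\xi)}\, dx\, d\xi, \qquad \phi \in \cS(\rd),
\]
verify $V_g f = F$, and deduce $f_n \to f$ in the modulation norm.

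For (ii) I would approximate $f \in M^{p,q}_{r,s}(\rd,E)$, $p,q<\infty$, by $f_n \coloneqq \phi_n * (\chi_n f)$, where $\chi_n \in \cS(\rd)$ is a smooth spatial cutoff converging to $1$ and $\phi_n$ is a scalar mollifier; the STFT behaves nicely under multiplication and convolution with scalar Schwartz functions, and dominated convergence (available because both $p$ and $q$ are finite) gives norm convergence with $f_n \in \cS(\rd,E)$. For (iii) I would apply the change-of-window inequality with $g_1=g_2=g$ and absorb any finite loss in Lebesgue or weight exponents into the Schwartz kernel $V_g g$ via Young's inequality, obtaining the continuous inclusions. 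For (iv), since $\bC^{a\times b}$ is finite-dimensional the norm $\lv v\rv_E$ is equivalent to $\max_{i,j}\lv v_{ij}\rv$, and linearity of the STFT yields $V_g f_{ij} = (V_g f)_{ij}$, whence $\lV f\rV_{M^{p,q}_{r,s}(\rd,\bC^{a\times b})}$ is equivalent to $\max_{i,j} \lV f_{ij}\rV_{M^{p,q}_{r,s}(\rd,\bC)}$.

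The only genuinely delicate point will be the completeness step in (i). One might be tempted to reconstruct $f$ from $F$ via an $L^2(\rd,E)$-style Moyal identity, but as noted in the preliminaries such identities fail in general Banach targets by the Kwapień phenomenon. The remedy is to use the reconstruction formula strictly in its weak form against scalar test functions $\phi \in \cS(\rd)$, where the reproducing kernel $V_g\phi$ is scalar-valued and no vector-valued Plancherel identity is invoked; this is precisely what makes the whole theory function for arbitrary complex Banach spaces $E$.
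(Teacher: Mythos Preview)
Your proposal is correct and matches exactly the approach the paper intends: the paper does not give an explicit proof of this proposition but simply states it as a known collection of facts, prefacing it with the remark that ``most of the ordinary theory extends to the vector-valued context by simply substituting $|\cdot|$ with $|\cdot|_E$ in the proofs'' and referring to \cite{wahlberg} for the details. Your sketch carries out precisely this substitution in the standard Gr\"ochenig-style arguments, and your caution about avoiding a vector-valued Plancherel identity in the completeness step is well placed and consistent with the paper's own warning about the Kwapie\'n obstruction.
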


\begin{remark} \label{rem dual mod sp} In contrast to the aforementioned properties, duality is a quite subtle question (cf. \cite{wahlberg}). In order to avoid related issues, which usually occur when $p,q \in \{ 1,\infty\}$, it is convenient to introduce the space $\cM^{p,q}_{r,s}(\rd,E)$, namely the closure of $\cS(\rd,E)$ with respect to the $M^{p,q}_{r,s}$ norm. In particular we have $\cM^{p,q}_{r,s}(\rd,E) = M^{p,q}_{r,s}(\rd,E)$ for $1\le p,q < \infty$. 
\end{remark}

By reversing the order of integrals in the definition of modulation spaces one obtains a new family of spaces. 
\begin{definition}
	Let $1 \le p,q \le \infty$ and $r,s \in \bR$. The $E$-valued modulation space $W^{p,q}_{r,s}(\rd,E)$ consists of distributions $f \in \cS'(\rd,E)$ such that
	\[
	\lVert f \rVert_{W^{p,q}_{r,s}} = \left( \int_{\rd} \left( \int_{\rd} \lv V_gf(x,\xi) \rv_E^p \la \xi \ra^{rp} d\xi \right)^{q/p} \la x \ra^{sq} dx \right)^{1/q} < \infty, \] for some $g \in \cS(\rd)$, with suitable modification for $p=\infty$ or $q=\infty$.
\end{definition}

From \eqref{fund id tfa} we immediately get $\lV \hat{f} \rV_{M^{p,q}_{r,s}} = \lV f \rV_{W^{p,q}_{r,s}}$, that is $\cF M^{p,q}_{r,s}(\rd,E) = W^{p,q}_{r,s}(\rd,E)$. This should not come as a surprise, since Feichtinger originally designed modulation spaces as Wiener amalgam spaces on the Fourier side \cite{fei modulation 83,fei looking}. Furthermore, the results stated in Proposition \ref{mod sp prop} have an identical counterpart for Wiener amalgam spaces, it is enough to replace $M^{p,q}_{r,s}$ with $W^{p,q}_{r,s}$ in the claim.  

As already noted by Wahlberg \cite{wahlberg}, the spaces $W^{p,q}_{r,s}(\rd,E)$ are in fact Wiener amalgam spaces in the broadest sense, namely 
\[ W^{p,q}_{r,s}(\rd,E) = W(\cF L^p_r(\rd,E), L^q_s(\rd)), \] 
hence they inherit certain properties from their local and global components \cite{fei wien 83}. In order to exploit this connection we introduce a useful equivalent discrete norm for amalgam spaces. Recall that a bounded uniform partition of function (BUPU)  $(\{\psi_i\}_{i\in I}, (x_i)_{i \in I}, U)$ consists of a family of non-negative functions in $\cF L^1_{|r|}(\rd)$  $\{ \psi_i \}_{i\in I}$ such that the following conditions are satisfied:
\begin{enumerate}
	\item $\sum_{i \in I} \psi_i (x) =1$, for any $x\in \rd$;
	\item $\sup_{i\in I} \lV \psi_i \rV_{\cF L^1_{|r|}} < \infty$;
	\item there exist a discrete family $(x_i)_{i\in I}$ in $\rd$ and a relatively compact set $U\subset \rd$ such that $\supp(\psi_i) \subset x_i + U$ for any $i \in I$, and
	\item $\sup_{i\in I} \#\{j\,:\, x_i+U \cap x_j+U \ne \emptyset \} < \infty$.
\end{enumerate}
A general result in the theory of amalgam spaces is the following norm equivalence in the spirit of decomposition spaces \cite{fei wien 83,fei dec 1,fei dec 2}:
\begin{equation} \label{was disc norm}
\lV f \rV _{W^{p,q}_{r,s}(\rd,E)}  \asymp \lc \sum_{i \in I} \lV f \, \psi_i \rV_{\cF L^p_r(\rd,E)}^q \la x_i \ra^{sq} \rc^{1/q}.
\end{equation} 

A similar characterization holds for modulation spaces \cite{fei modulation 83,zhang}, providing a norm comparable to that of Besov spaces:
\begin{equation} \label{mod disc norm}
\lV f \rV_{M^{p,q}_{r,s}(\rd,E)} \asymp \lc \sum_{i \in I} \lV \boxy_i f \rV_{L^p_r(\rd,E)}^q \la x_i \ra^{sq} \rc^{1/q}
\end{equation}
where we introduced the frequency-uniform decomposition operators 
\[ \boxy_i \coloneqq \cF^{-1}\psi_i \cF, \qquad i\in I. \]

Many properties satisfied by modulation spaces carry over to Wiener amalgam spaces in view of the isomorphism established by the Fourier transform. In particular, a Young type result can be obtained after a suitable modification of the proof of \cite[Thm. 3]{fei wien 83}.
\begin{theorem}\label{was conv} 
Let $\bullet : E_1 \times E_2 \to E_3$ be a multiplication for the triple of Banach spaces $(E_1,E_2,E_3)$. For any $1\le p_1,p_2,p_3,q_1,q_2,q_3 \le \infty$ and $r_1,r_2,r_3,s_1,s_2,s_3 \in \bR$ such that 
	\[ \cF L^{p_1}_{r_1}(\rd,E_1) *_{\bullet} \cF L^{p_2}_{r_2}(\rd,E_2) \hookrightarrow \cF L^{p_3}_{r_3}(\rd,E_3), \]
	\[ L^{q_1}_{s_1}(\rd) * L^{q_2}_{s_2}(\rd) \hookrightarrow L^{q_3}_{s_3}(\rd), \]
	the following inclusion holds:
	\begin{equation} W^{p_1,q_1}_{r_1,s_1}(\rd,E_1) *_{\bullet} W^{p_2,q_2}_{r_2,s_2}(\rd,E_2) \hookrightarrow W^{p_3,q_3}_{r_3,s_3}(\rd,E_3). \end{equation}
\end{theorem}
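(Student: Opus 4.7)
The plan is to adapt Feichtinger's argument for the scalar Wiener amalgam convolution theorem \cite[Thm. 3]{fei wien 83} to the vector-valued setting, with the multiplication $\bullet : E_1 \times E_2 \to E_3$ playing the role of ordinary multiplication. The key technical device is the discrete BUPU norm \eqref{was disc norm}, which reduces the continuous estimate to a discrete Young-type inequality on the index set.

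First I would fix a BUPU $(\{\psi_i\}, (x_i)_{i\in I}, U)$ and decompose $f = \sum_i f_i$, $g = \sum_j g_j$ where $f_i = f \psi_i$ and $g_j = g \psi_j$. Since $\supp(f_i *_{\bullet} g_j) \subset x_i + x_j + 2U$, for each $k \in I$ only pairs with $x_i + x_j$ at uniformly bounded distance from $x_k$ contribute to $(f *_{\bullet} g)\psi_k$. I would then perform a local estimate: because $\psi_k \in \cF L^1_{|r_3|}(\rd)$, pointwise multiplication by $\psi_k$ is bounded on $\cF L^{p_3}_{r_3}(\rd,E_3)$ via Fourier-side convolution with $\widehat{\psi_k}\in L^1_{|r_3|}(\rd)$; combined with the first hypothesis this yields
\[ \lV (f_i *_{\bullet} g_j)\psi_k \rV_{\cF L^{p_3}_{r_3}(E_3)} \lesssim \lV f_i \rV_{\cF L^{p_1}_{r_1}(E_1)} \lV g_j \rV_{\cF L^{p_2}_{r_2}(E_2)}, \]
with a constant uniform in $i,j,k$ thanks to property (2) in the definition of BUPU.

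Setting $a_i := \lV f_i \rV_{\cF L^{p_1}_{r_1}(E_1)}$ and $b_j := \lV g_j \rV_{\cF L^{p_2}_{r_2}(E_2)}$, summing the previous bound over the finitely many relevant pairs gives
\[ \lV (f *_{\bullet} g)\psi_k \rV_{\cF L^{p_3}_{r_3}(E_3)} \lesssim \sum_{(i,j)\,:\, x_i+x_j \in x_k + V} a_i\, b_j, \]
for some fixed compact set $V \subset \rd$; this is a short-range discrete convolution on the relatively separated grid $\{x_i\}$. The final step is to plug this pointwise bound into \eqref{was disc norm} and apply the discrete counterpart of the hypothesis $L^{q_1}_{s_1} * L^{q_2}_{s_2} \hookrightarrow L^{q_3}_{s_3}$, namely $\ell^{q_1}_{s_1} * \ell^{q_2}_{s_2} \hookrightarrow \ell^{q_3}_{s_3}$ on $\{x_i\}$; recognizing $\lV a \rV_{\ell^{q_1}_{s_1}} \asymp \lV f \rV_{W^{p_1,q_1}_{r_1,s_1}(E_1)}$ and similarly for $b$ then produces the desired estimate.

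The main obstacle will be the rigorous transfer from the continuous Young-type inclusion on weighted $L^{q_j}_{s_j}(\rd)$ to the discrete version on $\ell^{q_j}_{s_j}(\{x_i\})$ for arbitrary real weight exponents $s_j$. This is handled by replacing each sequence by a sum of translates of a single bump and comparing $\ell^{q_j}_{s_j}$ and $L^{q_j}_{s_j}$ norms using the BUPU uniformity (finite overlap, relative separation of $\{x_i\}$, and local boundedness of the weight $\la \cdot \ra^{s_j}$). Once this discretization is in place, the vector-valued nature contributes nothing extra: the target spaces $E_1,E_2,E_3$ enter only through the scalar sequences $a_i, b_j$, and the multiplication $\bullet$ only through the local hypothesis on the $\cF L^{p_j}_{r_j}$ components.
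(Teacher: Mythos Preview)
Your proposal is correct and follows essentially the same route as the paper's proof: localize via a BUPU, use the support property of $\bullet$-convolution to reduce to finitely many contributing pairs, estimate the local $\cF L^{p_3}_{r_3}$ pieces by the hypothesis on the local components, and then apply a discrete Young inequality on the control sequences. The only noteworthy difference is that the paper fixes from the outset a concrete lattice BUPU indexed by $\bZ^d$ (built from a single bump $\phi\in C^\infty_c(\rd)$), so that the contributing set becomes $\{(m,n): m=k-n+\alpha,\ |\alpha|\le N(d)\}$ and the control-function bound is an honest finite sum of translated discrete convolutions on $\bZ^d$; this choice sidesteps the ``relatively separated grid'' technicalities you flag as the main obstacle, and the continuous-to-discrete transfer $L^{q_1}_{s_1}*L^{q_2}_{s_2}\hookrightarrow L^{q_3}_{s_3} \Rightarrow \ell^{q_1}_{s_1}*\ell^{q_2}_{s_2}\hookrightarrow \ell^{q_3}_{s_3}$ is then immediate.
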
 
\begin{proof} For the benefit of the reader we sketch here a short proof in the spirit of \cite[Thm. 11.8.3]{heil}. We consider as BUPU for $W^{p,q}_{r,s}(\rd,E)$ the family $\{ \psi_k\}_{k\in\zd} \subset \cF L^1_{|r|}(\rd)$ defined by 
	\[\psi_k(t)=\frac{\phi(t-k)}{\sum_{k\in\zd}\phi(t-k)}, \quad t\in\rd, \]
	for a fixed $\phi \in C^{\infty}_c(\rd)$ such that $\phi(t)=1$ for $t\in[0,1]^d$ and $\phi(t)=0$ for $t \in \rd\setminus[-1,2]^d$. After introducing the control functions 
	\[ \Psi_{f,p,r,E}(k) \coloneqq \lV f\, \psi_k \rV_{\cF L^p_r(\rd,E)}, \quad k \in \bZ^d, \] the equivalent norm \eqref{was disc norm} becomes
	\[ \lV f \rV_{W^{p,q}_{r,s}(\rd,E)} \asymp \lc \sum_{k\in\zd} \lV f \, \psi_k \rV_{\cF L^p_r(\rd,E)}^q \la k \ra^{qs} \rc^{1/q} \asymp \lV \Psi_{f,p,r,E} \rV_{\ell^q_s(\zd)}. \]
	For $f \in W^{p_1,q_1}_{r_1,s_1}(\rd,E_1)$ and $g\in W^{p_2,q_2}_{r_2,s_2}(\rd,E_2)$ set $f_m = f \psi_m$, $g_n = g  \psi_n$ for $m,n \in \zd$. In view of the support property \cite[Rem. 1.9.6(f)]{amann book 3} and the properties of BUPUs, we have 
	\[ \supp(f_m \*b g_n) \subset \supp(f_m) + \supp(g_n) =  m+n+2\,\supp \psi. \]
	It is then clear that the cardinality of the set 
	$ J_k \coloneqq \{ (m,n) \in \zdd: \supp((f_m \*b g_n) \psi_k) \neq \emptyset \}$ is finite for any $k \in \zd$ and is uniformly bounded with respect to $m,n,k$. In fact, notice that \[ J_k = \{ (m,n) \in \zdd: m = k-n+\alpha, \, |\alpha| \le N(d) \},\] for a fixed constant $N(d) \in \bN$ depending only on the dimension $d$. Therefore, an easy computation yields \[ \Psi_{f\*b g, p_3,r_3,E_3}(k) = \sum_{|\alpha|\le N(d)} \Psi_{f,p_1,r_1,E_1} * \Psi_{g,p_2,r_2,E_2}(k+\alpha), \] and hence \[ 	\lV f\*b g \rV_{W^{p_3,q_3}_{r_3,s_3}(\rd,E_3)} \lesssim \lV f \rV_{W^{p_1,q_1}_{r_1,s_1}(\rd,E_1)} \lV g \rV_{W^{p_2,q_2}_{r_2,s_2}(\rd,E_2)},  \] that is the claim.
\end{proof}

\begin{remark}\label{minfty mult} In view of the relation with modulation spaces and Young inequality for convolution, under the same assumptions of the previous theorem we also have 
	\begin{equation}\label{pointw mod sp} M^{p_1,q_1}_{r_1,s_1}(\rd,E_1) \bullet M^{p_2,q_2}_{r_2,s_2}(\rd,E_2) \hookrightarrow M^{p_3,q_3}_{r_3,s_3}(\rd,E_3). \end{equation}
\end{remark}

An interesting relation between modulation and Wiener amalgam spaces is given by the following generalized Hausdorff-Young inequality, which is a direct consequence of Minkowski's integral inequality: 
\begin{equation}\label{gen hy} M^{p,q}_{r,s}(\rd,E) \hookrightarrow W^{q,p}_{s,r}(\rd,E), \quad 1 \le q \le p \le \infty, \, r,s\in \bR. \end{equation}

\subsection{Fourier multipliers}\label{fou mult sec}

We now provide sufficient conditions on the symbol of a Fourier multiplier in order for it to be bounded on modulation and Wiener amgalgam spaces. 

\begin{proposition} \label{fou mult mod} 
	Let $\bullet: E_0 \times E_1 \to E_2$ be a multiplication and $\mu \in W^{1,\infty}_{|r|,\delta}(\rd,E_0)$ for some $r,\delta \in \bR$. The Fourier multiplier $\mu(D)$ is bounded from  $M^{p,q}_{r,s}(\rd,E_1)$ to $M^{p,q}_{r,s+\delta}(\rd,E_2)$ for any $1\le p,q \le \infty$ and $s \in \bR$. In particular, 
	\[ 	\lV \mu(D)f \rV_{M^{p,q}_{r,s+\delta}(E_2)} \lesssim \lV \mu \rV_{W^{1,\infty}_{|r|,\delta}(E_0)} \lV f \rV_{M^{p,q}_{r,s}(E_1)}, \quad f \in M^{p,q}_{r,s}(E_1).  \]
\end{proposition}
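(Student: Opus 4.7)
The plan is to reduce the claim to a weighted $\ell^q$-estimate via the discrete characterizations \eqref{mod disc norm}--\eqref{was disc norm}. Fix a BUPU $\{\psi_i\}_{i\in \zd}$ on the frequency side with centres $(x_i)$ and form the frequency-uniform decomposition operators $\boxy_i=\cF^{-1}\psi_i\cF$. The norms entering the statement then become
\[
\lV f\rV_{M^{p,q}_{r,s}(E_1)}\asymp\Big(\sum_i\la x_i\ra^{sq}\lV\boxy_i f\rV_{L^p_r(E_1)}^q\Big)^{1/q},\qquad \lV \mu\rV_{W^{1,\infty}_{|r|,\delta}(E_0)}\asymp \sup_i\la x_i\ra^{\delta}\lV\psi_i\mu\rV_{\cF L^1_{|r|}(E_0)},
\]
so the whole estimate acquires a discrete, Besov-type flavour.

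The first step is a Fourier-side localization. Since $\psi_i\psi_j=0$ for $j$ outside a uniformly bounded neighbourhood $N_i$ of $i$, writing $\hat f=\sum_j\psi_j\hat f$ and using that $\psi_i$ is scalar gives
\[
\boxy_i\mu(D)f=\cF^{-1}\bigl(\psi_i\mu\bullet\hat f\bigr)=\sum_{j\in N_i}\cF^{-1}(\psi_i\mu)*_\bullet\boxy_j f,
\]
so each block of $\mu(D)f$ is a finite $\bullet$-convolution of $\cF^{-1}(\psi_i\mu)$ against a bounded number of neighbouring blocks of $f$.

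Second, I would apply the weighted vector-valued Young inequality of Proposition \ref{convol properties}(i) in the form $L^1_{|r|}*_\bullet L^p_r\hookrightarrow L^p_r$ (the admissible choice of weight parameters is $s_1=|r|,\,s_2=r,\,s_3=-r$) to obtain
\[
\lV\cF^{-1}(\psi_i\mu)*_\bullet\boxy_j f\rV_{L^p_r(E_2)}\lesssim \lV\psi_i\mu\rV_{\cF L^1_{|r|}(E_0)}\lV\boxy_j f\rV_{L^p_r(E_1)}\lesssim \la x_i\ra^{-\delta}\lV\mu\rV_{W^{1,\infty}_{|r|,\delta}}\lV\boxy_j f\rV_{L^p_r(E_1)}.
\]
Multiplying by $\la x_i\ra^{s+\delta}$ and using $\la x_i\ra\asymp\la x_j\ra$ on $N_i$ yields
\[
\la x_i\ra^{s+\delta}\lV\boxy_i\mu(D)f\rV_{L^p_r(E_2)}\lesssim \lV\mu\rV_{W^{1,\infty}_{|r|,\delta}}\sum_{j\in N_i}\la x_j\ra^{s}\lV\boxy_j f\rV_{L^p_r(E_1)}.
\]

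Finally, I take the $\ell^q$-norm in $i$; the uniformly bounded overlap $\sup_i|N_i|<\infty$, or equivalently a discrete Young inequality $\ell^q*\ell^1\hookrightarrow\ell^q$ against a fixed finite-range kernel, absorbs the inner sum into the outer $\ell^q$-norm, giving the desired bound $\lV\mu(D)f\rV_{M^{p,q}_{r,s+\delta}(E_2)}\lesssim\lV\mu\rV_{W^{1,\infty}_{|r|,\delta}(E_0)}\lV f\rV_{M^{p,q}_{r,s}(E_1)}$. The only subtle point is really conceptual: checking that the scalar arguments (the finite-overlap property of the BUPU, the weighted Young inequality, and the discrete characterizations) transfer to the vector-valued $\bullet$-convolution setting with the triple $(E_0,E_1,E_2)$; but this is exactly what Proposition \ref{convol properties} and the equivalences \eqref{mod disc norm}--\eqref{was disc norm} have been set up to guarantee, so no genuinely new difficulty arises.
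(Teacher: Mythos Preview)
Your argument is correct. It is, however, a genuinely different route from the one the paper has in mind. The paper points to the scalar argument of B\'enyi--Gr\"ochenig--Okoudjou--Rogers (Lemma~8 there), which is the \emph{continuous} STFT approach: one factors the window as $g=g_0*g_1$, uses \eqref{stft convol} together with the associativity of $\bullet$-convolutions to obtain a pointwise bound
\[
|V_g(\mu(D)f)(x,\xi)|_{E_2}\le \bigl(|V_{g_0}(\cF^{-1}\mu)(\cdot,\xi)|_{E_0}*|V_{g_1}f(\cdot,\xi)|_{E_1}\bigr)(x),
\]
and then applies the weighted Young inequality in $x$ followed by the $L^q_{s+\delta}$-norm in $\xi$, identifying $\|\cF^{-1}\mu\|_{M^{1,\infty}_{|r|,\delta}}=\|\mu\|_{W^{1,\infty}_{|r|,\delta}}$. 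This is why the paper flags associativity of $\bullet$-convolutions and Theorem~\ref{was conv} as the needed ingredients.

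Your proof instead stays on the \emph{discrete} side via \eqref{mod disc norm}--\eqref{was disc norm}: localize $\boxy_i\mu(D)f$ as a finite sum of $\bullet$-convolutions $\cF^{-1}(\psi_i\mu)*_\bullet\boxy_j f$, apply Proposition~\ref{convol properties}(i) blockwise, and close with the finite-overlap $\ell^q$ bound. This avoids the window factorization and the explicit associativity step, and is arguably more self-contained given that the paper has already recorded the discrete norm equivalences. The continuous approach, on the other hand, is closer to the standard time-frequency literature and makes the role of the amalgam convolution Theorem~\ref{was conv} transparent. Both lead to the same estimate with the same dependence on $\|\mu\|_{W^{1,\infty}_{|r|,\delta}}$.
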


\begin{proof} 
	The proof is a straightforward generalization of the argument used in the scalar-valued case; see for instance \cite[Lem. 8]{benyi unimod 2}. We remark that Theorem \ref{was conv} and the associativity of $\bullet$-convolutions \cite[Rem. 1.9.6(c)]{amann book 3} are required. \end{proof}

A similar result holds for Fourier multipliers on Wiener amalgam spaces. 
\begin{proposition}\label{fou mult was} Let $\bullet: E_0 \times E_1 \to E_2$ be a multiplication and $\mu \in M^{\infty,1}_{\delta,|s|}(\rd,E_0)$ for some $s,\delta \in \bR$. The Fourier multiplier with symbol $\mu$ is bounded from $W^{p,q}_{r,s}(\rd,E_1)$ to $W^{p,q}_{r+\delta,s}(\rd,E_2)$ for any $1\le p,q \le \infty$ and $r \in \bR$. In particular, 
	\[ \lV \mu(D)f \rV _{W^{p,q}_{r+\delta,s}(E_2)} \lesssim  \lV  \mu \rV _{M^{\infty,1}_{\delta,|s|}(E_0)} \lV f \rV _{W^{p,q}_{r,s}(E_1)}, \quad f \in W^{p,q}_{r,s}(E_1). \]
\end{proposition}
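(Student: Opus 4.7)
The strategy is to transfer the claim to a pointwise multiplication estimate on modulation spaces via the Fourier isometry $\cF: W^{p,q}_{r,s}(\rd,E) \to M^{p,q}_{r,s}(\rd,E)$ (an immediate consequence of the identity \eqref{fund id tfa}). Combining this isometry with the very definition $\mu(D)f = \cF^{-1}(\mu \bullet \hat{f})$ gives
\[
\lV \mu(D)f \rV_{W^{p,q}_{r+\delta,s}(E_2)} = \lV \mu \bullet \hat{f} \rV_{M^{p,q}_{r+\delta,s}(E_2)}, \qquad \lV f \rV_{W^{p,q}_{r,s}(E_1)} = \lV \hat{f} \rV_{M^{p,q}_{r,s}(E_1)}.
\]
Hence the whole proposition reduces to the pointwise multiplication inclusion
\[
M^{\infty,1}_{\delta,|s|}(\rd,E_0) \bullet M^{p,q}_{r,s}(\rd,E_1) \hookrightarrow M^{p,q}_{r+\delta,s}(\rd,E_2),
\]
with an implicit constant controlled by $\lV \mu \rV_{M^{\infty,1}_{\delta,|s|}}$.

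The inclusion above is precisely what Remark \ref{minfty mult} provides, once the two hypotheses inherited from Theorem \ref{was conv} are checked. The local-scale condition $\cF L^\infty_\delta(\rd,E_0) \*b \cF L^p_r(\rd,E_1) \hookrightarrow \cF L^p_{r+\delta}(\rd,E_2)$ is, after applying $\cF^{-1}$, equivalent to the weighted H\"older-type inclusion $L^\infty_\delta(\rd,E_0) \bullet L^p_r(\rd,E_1) \hookrightarrow L^p_{r+\delta}(\rd,E_2)$; this is immediate, since $|\mu(\xi)|_{E_0} \le \lV \mu \rV_{L^\infty_\delta}\la \xi \ra^{-\delta}$ combined with $\lV \bullet \rV \le 1$ yields $\la \xi \ra^{r+\delta}|(\mu \bullet h)(\xi)|_{E_2} \lesssim \lV \mu \rV_{L^\infty_\delta}\la \xi \ra^{r}|h(\xi)|_{E_1}$. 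The global-scale condition $L^1_{|s|}(\rd) * L^q_s(\rd) \hookrightarrow L^q_s(\rd)$ is in turn an instance of the weighted Young inequality from Proposition \ref{convol properties}(i) with $p_1=1$, $p_2=q$, $r=q$ and weight parameters $s_1=|s|$, $s_2=s$, $s_3=-s$; the three non-negativity constraints $s_1+s_2=|s|+s$, $s_1+s_3=|s|-s$, $s_2+s_3=0$ all hold.

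The only delicate point is the weight-bookkeeping across the Fourier duality. The shift $r \mapsto r+\delta$ on the inner ($\xi$) scale of the target $W$-space is fed precisely by the weight $\delta$ on the position variable of $\mu \in M^{\infty,1}_{\delta,|s|}$, while the global weight $s$ on the $x$-scale of $W$ survives unchanged because the weight $|s|$ on the frequency variable of $\mu$ absorbs either sign of $s$ in the $\ell^1$-convolution step of the discretization \eqref{mod disc norm}. Once these assignments are set, the estimate $\lV \mu(D)f \rV_{W^{p,q}_{r+\delta,s}(E_2)} \lesssim \lV \mu \rV_{M^{\infty,1}_{\delta,|s|}(E_0)} \lV f \rV_{W^{p,q}_{r,s}(E_1)}$ follows at once.
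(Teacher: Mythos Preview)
Your proposal is correct and essentially the same approach as the paper's: both reduce the estimate to Theorem~\ref{was conv}, the paper by writing $\mu(D)f=\cF^{-1}\mu *_{\bullet} f$ and applying the convolution inequality directly on $W$-spaces, you by conjugating with $\cF$ and invoking the Fourier-dual pointwise-multiplication statement of Remark~\ref{minfty mult}. The hypothesis checks you spell out ($L^\infty_\delta \bullet L^p_r \hookrightarrow L^p_{r+\delta}$ and $L^1_{|s|}*L^q_s \hookrightarrow L^q_s$) are exactly the ones implicitly used in the paper's three-line argument.
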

\begin{proof}
	Recall that $W^{p,q}_{r,s}(\rd,E) = W(\cF L^p_r(\rd,E),L^q_s(\rd))$. Theorem \ref{was conv} and the relation $\cF M^{p,q}_{r,s} = W^{p,q}_{r,s} $ thus yield
	\begin{align*}
	\lV \mu(D)f \rV _{W^{p,q}_{r+\delta,s}(E_2)} & = \lV  \cF^{-1}\mu *_{\bullet} f \rV _{W^{p,q}_{r+\delta,s}(E_2)} \\
	& \lesssim \lV  \cF^{-1}\mu \rV _{W^{\infty,1}_{\delta,|s|}(E_0)} \lV f \rV _{W^{p,q}_{r,s}(E_1)} \\
	& \lesssim \lV  \mu \rV _{M^{\infty,1}_{\delta,|s|}(E_0)} \lV f \rV _{W^{p,q}_{r,s}(E_1)}.
	\end{align*} 
\end{proof}

\subsection{The Wigner distribution and the Weyl transform}\label{wig weyl sec}
Given $f,g \in L^2(\rd,H)$, the Wigner distribution $W(f,g)(x,\xi)\in \cL(H)$, $x,\xi \in \rd$,  is defined as follows:  
\begin{equation} \label{wig def}
W(f,g)(x,\xi) = [\cF \mathfrak{T}_s P(f,g)(x,\cdot)](\xi),
\end{equation}
where we introduced the projector-valued function 
\[ 
P(f,g) : \rdd \to \cL^1(H), \qquad P(f,g)(x,y) \coloneqq |f(x)\ra \la g(y)|,
\]
and $\mathfrak{T}_s$ is the linear transformation acting on $F:\rdd \to H$ as
\[ \mathfrak{T}_sF(x,y) = F\lc x+\frac{y}{2}, x- \frac{y}{2} \rc. \]
It is therefore clear that $W(f,g) : \rdd \to \cL^1(H)$ and in particular \cite{folland,wahlberg}
\[ \lc W(f,g)(x,\xi)u,v \rc_H = \int_{\rd} e^{-2\pi i y\cdot \xi}\lc f(x+y/2),v \rc_H \overline{\lc g(x-y/2),u \rc_H} dy,\] for any $ u,v \in H$. More concisely, we have 
\[ \lc W(f,g)(x,\xi)u,v \rc_H = W(\wt{f_v},\wt{g_u})(x,\xi), \]where on the right-hand side we have the ordinary Wigner distribution of the functions
\[ \wt{f_v}(t) = (f(t),v)_H,\quad \wt{g_u}(t) = (g(t),u)_H. \]

The following properties of the Wigner distributions are well known in the standard setting \cite{gro book} and can be easily derived in the vector-valued context.

\begin{proposition} \label{wig prop} For any $f,g\in \cS(\rd,H)$ and $x,\xi \in \rd$: \begin{enumerate}[label=(\roman*)]
	\item $W(f,g) \in \cS(\rdd,\cL^1(H))$.
	\item $W(f,g)(x,\xi) = W(\hat{f},\hat{g})(\xi,-x)$. 
	\item $\int_{\rd} W(f,g)(x,\xi) dx = |\hat{f}(\xi)\ra \la \hat{g}(\xi)|. $
	\item $\int_{\rd} W(f,g)(x,\xi) d\xi = |f(x)\ra \la g(x)|. $
\end{enumerate}	
\end{proposition}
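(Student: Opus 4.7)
My plan is to prove each of the four properties by reducing to the scalar-valued case via matrix elements, exploiting the identity
\[ (W(f,g)(x,\xi)u, v)_H = W(\wt{f_v}, \wt{g_u})(x,\xi) \]
recorded just before the proposition, where the right-hand side is the usual scalar Wigner distribution of the Schwartz functions $\wt{f_v}(t) = (f(t),v)_H$ and $\wt{g_u}(t) = (g(t),u)_H$. Each of (i)--(iv) is classical in the scalar case, so the task reduces to lifting those facts across the rank-one factorization $P(f,g) = |f(x)\ra\la g(y)|$ and the trace-class structure of the range.

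For (i), I would first observe that $\lV P(f,g)(x,y) \rV_{\cL^1(H)} = |f(x)|_H |g(y)|_H$, and that every partial derivative of $P(f,g)$ is again a rank-one operator whose trace norm factors as $|\partial^\alpha f(x)|_H |\partial^\beta g(y)|_H$. Since $|f|_H, |g|_H \in \cS(\rd)$, this gives $P(f,g) \in \cS(\rdd, \cL^1(H))$. The symplectic substitution $\mathfrak{T}_s$ preserves this space (using e.g.\ $\la x \ra \la y \ra \lesssim \la x+y/2 \ra \la x-y/2 \ra$ up to appropriate powers), and the partial Fourier transform in $y$ is a continuous automorphism of $\cS(\rdd, \cL^1(H))$, since $\cL^1(H)$ is a Banach space. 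Combining these yields $W(f,g) \in \cS(\rdd, \cL^1(H))$.

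For (ii), I would test both sides against arbitrary $u, v \in H$. The scalar identity $W(\varphi,\psi)(x,\xi) = W(\hat\varphi, \hat\psi)(\xi,-x)$ is classical for $\varphi, \psi \in \cS(\rd)$. The key compatibility to verify is that Bochner-Fourier commutes with the inner product, i.e.\ $\widehat{\wt{f_v}}(\eta) = (\hat f(\eta), v)_H$, which is immediate by moving $v$ inside the (absolutely convergent) Bochner integral. The identity thus holds at every matrix element, and since $\cL^1(H) \hookrightarrow \cL(H)$ and matrix elements separate operators, this suffices. For (iii) and (iv) the same scheme applies starting from the scalar marginals $\int W(\varphi,\psi)(x,\xi)\,dx = \hat\varphi(\xi)\overline{\hat\psi(\xi)}$ and $\int W(\varphi,\psi)(x,\xi)\,d\xi = \varphi(x)\overline{\psi(x)}$; exchanging the Bochner integral with the inner product (justified by (i), which makes $W(f,g)(\cdot,\xi)$ and $W(f,g)(x,\cdot)$ Bochner-integrable in $\cL^1(H)$) yields matrix elements of the form $(\hat f(\xi), v)_H(u, \hat g(\xi))_H$ and $(f(x), v)_H (u, g(x))_H$ respectively, which are exactly the matrix elements of $|\hat f(\xi)\ra\la\hat g(\xi)|$ and $|f(x)\ra\la g(x)|$.

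I expect the main obstacle to be bookkeeping rather than mathematical: systematically justifying the interchange of Bochner integrals with inner products and with partial Fourier transforms, so that the scalar-valued identities transfer cleanly. Everything here is standard because $f, g$ are Schwartz and the rank-one factorization makes trace norms computable explicitly; once (i) is in hand, statements (ii)--(iv) follow by a uniform pattern of testing against $u, v \in H$ and invoking the corresponding scalar identity.
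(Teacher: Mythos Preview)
Your proposal is correct and aligns with the paper's approach: the paper gives no detailed proof, stating only that the properties ``are well known in the standard setting \cite{gro book} and can be easily derived in the vector-valued context,'' and immediately before the proposition records precisely the matrix-element identity $(W(f,g)(x,\xi)u,v)_H = W(\wt{f_v},\wt{g_u})(x,\xi)$ that you use as the reduction device. Your argument is exactly the intended derivation, with the bookkeeping (rank-one trace norms, commuting Bochner integrals with inner products) spelled out.
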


The Wigner transform can be extended to $f,g\in \cS'(\rd,H)$ as follows \cite{wahlberg}. Let $\Phi= W(\phi_1, \phi_2)$ for $\phi_1,\phi_2 \in \cS(\rd)$; then $W(f,g)\in \cS'(\rdd,\cL^1(H))$ is such that
\[ \lc \la W(f,g),\Phi \ra u, v \rc_H = \lc \la f,\phi_1\ra,v\rc_H \overline{\lc\la g,\phi_2\ra,u\rc_H}, \quad u,v\in H. \]

Assume now $\sigma \in \cS'(\rdd,\cL(H))$. The Weyl transform $\sigma^{\w} : \cS(\rd,H)\to \cS'(\rd,H)$ is defined by duality as 
\begin{equation} \la \sigma^{\w}f,g \ra = \int_{\rdd}\tr\left[ \sigma(x,\xi) W(g,f)(x,\xi) \right]dxd\xi, \quad f,g\in \cS(\rd,H). \end{equation}
For further details see \cite[pp. 135--137]{folland} and \cite{wahlberg}. 

A classical, remarkable result in the scalar-valued case is the boundedness of Weyl transforms with symbols in the Sj\"ostrand class on any modulation and Wiener amalgam space \cite[Thm. 14.5.2]{gro book}. This property still holds in the vector valued case.  

\begin{theorem} \label{bound weyl} Let $1 \le p,q \le \infty$, $\gamma \ge 0$ and $r,s \in \bR$ be such that $|r|+|s| \le \gamma$; denote by $X$ any of the spaces $\cM^{p,q}_{r,s}(\rd,H)$ or $\cW^{p,q}_{r,s}(\rd,H)$. If $\sigma \in M^{\infty,1}_{0,2\gamma}(\rdd,\cL(H))$ then the Weyl operator $\sigma^{\w}$ is bounded on $X$.  
\end{theorem}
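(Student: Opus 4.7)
The plan is to adapt the classical Gröchenig--Heil argument for the boundedness of Weyl operators with Sjöstrand symbols (\cite[Thm. 14.5.2]{gro book}) to the operator-valued setting, in the spirit of \cite{wahlberg}, while keeping careful track of the polynomial weights $\la x\ra^r \la \xi\ra^s$. The starting point is a pointwise STFT kernel estimate. Fix a nonzero Gaussian $g\in \cS(\rd)$ and let $\Phi=W(g,g)\in \cS(\rdd)$. By inserting the STFT inversion formula for $\sigma$ into the duality definition of $\sigma^\w$ and using the operator-valued Moyal-type identity $\tr[W(f_1,g_1)(x,\xi)^* W(f_2,g_2)(x,\xi)]$-pairing (which extends from \cite{wahlberg} via polarization from the projector-valued Wigner distribution introduced in Subsection~\ref{wig weyl sec}), one obtains
\[
|V_g(\sigma^\w f)(x,\eta)|_H \;\lesssim\; \iint_{\rdd\times\rdd} |V_{\Phi}\sigma(z,\zeta)|_{\cL(H)}\,|V_g f(x',\eta')|_H\,dz\,d\zeta,
\]
where $(x',\eta')$ is an explicit affine shift of $(x,\eta)$ by components of $(z,\zeta)\in \rdd\times\rdd$ determined by the symplectic form appearing in the Wigner transform.

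Next I would plug this estimate into the $L^q_s(L^p_r)$-norm defining $M^{p,q}_{r,s}(\rd,H)$. After reindexing the integration variable to $(x',\eta')$ and applying Minkowski's integral inequality (as in the proof of Proposition \ref{fou mult mod}), one gets
\[
\lV \sigma^\w f\rV_{M^{p,q}_{r,s}(H)} \;\lesssim\; \iint_{\rdd} \sup_{z\in \rdd} |V_\Phi \sigma(z,\zeta)|_{\cL(H)}\,\la \zeta\ra^{|r|+|s|}\,d\zeta\;\lV f\rV_{M^{p,q}_{r,s}(H)}.
\]
The polynomial factor $\la\zeta\ra^{|r|+|s|}$ arises from the standard moderateness estimate $\la x\ra^r\la \eta\ra^s \lesssim \la x'\ra^r\la \eta'\ra^s \la \zeta\ra^{|r|+|s|}$ used to pull the weight of $f$ across the shift. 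Since $|r|+|s|\le \gamma \le 2\gamma$, the first factor is controlled by $\lV\sigma\rV_{M^{\infty,1}_{0,2\gamma}(\rdd,\cL(H))}$, which is the Sjöstrand-type hypothesis.

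For the Wiener amalgam case $X=\cW^{p,q}_{r,s}(\rd,H)$, I would invoke the covariance of the Weyl calculus under the Fourier transform: if $\tilde{\sigma}(x,\xi)=\sigma(\xi,-x)$ then $\cF\,\sigma^\w\,\cF^{-1}=\tilde{\sigma}^\w$. Combined with the identity $\cF M^{p,q}_{r,s}(\rd,H)=W^{p,q}_{s,r}(\rd,H)$ and the symmetry of the weight in the class $M^{\infty,1}_{0,2\gamma}(\rdd,\cL(H))$ under $(x,\xi)\mapsto(\xi,-x)$, the amalgam boundedness reduces to the modulation-space estimate just proved. Finally, a density argument (recall that $\cM^{p,q}_{r,s}$ and $\cW^{p,q}_{r,s}$ are by definition the closures of $\cS(\rd,H)$ in the respective norms, cf. Remark \ref{rem dual mod sp}) extends the estimate from Schwartz functions to all of $X$ and guarantees that $\sigma^\w f \in X$ rather than merely in the ambient modulation space; this is the reason for working with the $\cM, \cW$ variants when $p$ or $q$ equals $\infty$.

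The main obstacle I anticipate is the clean derivation of the pointwise kernel estimate: in the operator-valued context the Wigner distribution $W(M_\eta T_x g, f)$ is projector-valued in $\cL^1(H)$ and $\sigma$ is $\cL(H)$-valued, so the pairing must be interpreted via the trace duality established in Subsection \ref{wig weyl sec}. Once this bookkeeping is in place, the rest of the proof is essentially a weighted-convolution manipulation of the same nature as Propositions \ref{fou mult mod} and \ref{fou mult was}, and no new analytical difficulty arises.
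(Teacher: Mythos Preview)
Your proposal is correct and follows essentially the same route as the paper: the modulation space case is handled by the weighted Gr\"ochenig--Sj\"ostrand kernel estimate (which the paper simply cites from \cite{wahlberg} and \cite[Thm.~14.5.6]{gro book}, while you sketch it), and the Wiener amalgam case is reduced to it via the symplectic covariance $\cF\,\sigma^{\w}\,\cF^{-1}=(\sigma\circ J)^{\w}$ together with the invariance of $M^{\infty,1}_{0,2\gamma}(\rdd,\cL(H))$ under composition with $J$. One small slip: with the paper's conventions the Fourier transform gives $\cF M^{p,q}_{r,s}=W^{p,q}_{r,s}$ (same weight indices), not $W^{p,q}_{s,r}$, so the diagram closes directly without swapping $r$ and $s$.
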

\begin{proof}
	The case $X=\cM^{p,q}_{r,s}(H)$ is covered by \cite[Cor. 4.8]{wahlberg}, and it is stated here with small modifications in the spirit of \cite[Thm. 14.5.6]{gro book} in order to take the weights into account. 
	For the case $X=\cW^{p,q}_{r,s}(H)$ we need an extension of the well-known \textit{symplectic covariance} property of the Weyl calculus \cite{dg book,gro book}, namely \[ \cF \sigma^{\w} = \sigma_{J^{-1}}^{\w} \cF, \qquad \sigma \in \cS'(\rdd,\cL(H)), \] where $\sigma_{J^{-1}}= \sigma \circ J^{-1}$; the proof is a straightforward application of Proposition \ref{wig prop} above. In view of this property, consider the following diagram: 
	\[
	\xymatrix{\cM_{r,s}^{p,q}(\rd,H)\ar[r]^{\sigma_J^{\w}} &  \cM_{r,s}^{p,q}(\rd,H)\ar[d]^{\mathcal{F}}\\
		\cW^{p,q}_{r,s}(\rd,H) \ar[r]^{\sigma^{\w}}\ar[u]_{\mathcal{F}^{-1}} &  \cW^{p,q}_{r,s}(\rd,H)
	}
	\] 
	It is easy to prove that if $\sigma \in M^{\infty,1}_{0,2\gamma}(\rdd,\cL(H))$ then $\sigma_J \in M^{\infty,1}_{0,2\gamma}(\rdd,\cL(H))$ too (cf. for instance the proof of \cite[Lem. 5.2]{cnt 18}), hence the preceding case implies that $\sigma_J^{\w}$ is bounded on $\cM^{p,q}_{r,s}(\rd,H)$ for any $1\le p,q \le \infty$ and $r,s\in\bR$ such that $|r|+|s|\le \gamma$. 
\end{proof}

The relevance of the Sj\"ostrand class is also enforced by the following characterization - the proof goes exactly as that of \cite[Thm. 14.5.3]{gro book} and \cite[Lem. 6.1]{gro rze} with $|\cdot|$ replaced by $|\cdot |_{E}$.
\begin{proposition} \label{S0 in sjo} The following characterization holds: 
	\[ \S0(\rd,E) = \bigcap_{s\ge0} M^{\infty}_{0,s}(\rd,E) = \bigcap_{s\ge0} M^{\infty,1}_{0,s}(\rd,E). \]
\end{proposition}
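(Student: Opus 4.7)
The plan is to verify the chain of inclusions
\[
\S0(\rd,E) \subseteq \bigcap_{s\ge 0} M^{\infty,1}_{0,s}(\rd,E) \subseteq \bigcap_{s\ge 0} M^{\infty}_{0,s}(\rd,E) \subseteq \S0(\rd,E),
\]
mirroring the scalar argument in \cite{gro book,gro rze} with $|\cdot|$ replaced by $|\cdot|_E$ and Lebesgue integrals replaced by Bochner integrals.

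For the first inclusion, fix $f \in \S0(\rd,E)=C^{\infty}_{\ge 0}(\rd,E)$ and a window $g\in\cS(\rd)$. Writing $V_g f(x,\xi) = \cF(f \cdot \overline{T_x g})(\xi)$ and applying the Leibniz rule, $\partial^{\alpha}(f \cdot \overline{T_x g})$ is a finite sum of products of the uniformly bounded $E$-valued derivatives $\partial^{\beta}f$ with translates of Schwartz functions $\partial^{\alpha-\beta}\overline{g(\cdot-x)}$; its $L^1(\rd,E)$-norm is therefore bounded by a constant independent of $x$. From the identity $(2\pi i\xi)^{\alpha} V_g f(x,\xi) = \cF[\partial^{\alpha}(f \cdot \overline{T_x g})](\xi)$ and Riemann--Lebesgue we deduce $\lv V_g f(x,\xi)\rv_E \le C_N \la \xi \ra^{-N}$ uniformly in $x$ for every $N$, so $\int \sup_x \lv V_g f(x,\xi)\rv_E \la \xi \ra^{s} d\xi < \infty$ for all $s\ge 0$, i.e.\ $f \in \bigcap_s M^{\infty,1}_{0,s}(\rd,E)$. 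The middle inclusion is immediate from Proposition \ref{mod sp prop}(iii), since $M^{\infty,1}_{0,s}(\rd,E) \hookrightarrow M^{\infty,\infty}_{0,s}(\rd,E) = M^{\infty}_{0,s}(\rd,E)$ for each fixed $s$.

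For the reverse inclusion, suppose $f \in \bigcap_s M^{\infty}_{0,s}(\rd,E)$, which means $\lv V_g f(x,\xi)\rv_E \le C_N \la \xi \ra^{-N}$ uniformly in $x$ for every $N\in \bN$. Fix $g \in \cS(\rd)$ with $\lV g\rV_{L^2}=1$ and invoke the vector-valued STFT inversion formula
\[
f(t) = \int_{\rdd} V_g f(x,\xi)\, M_{\xi} T_x g(t)\, dx\, d\xi,
\]
which converges as a Bochner integral in $E$ pointwise in $t$, because $\int\int |g(t-x)|\,\la\xi\ra^{-N} dx\,d\xi <\infty$ for $N>d$. Differentiating under the integral and expanding $\partial^{\alpha}_t(e^{2\pi i t\xi}g(t-x))$ by the Leibniz rule, each resulting term is dominated by $C_N \int\int \la \xi \ra^{-N+|\beta|}\,|\partial^{\alpha-\beta} g(t-x)|\,dx\,d\xi$; choosing $N > |\alpha| + d$ bounds $\lv \partial^{\alpha} f(t)\rv_E$ uniformly in $t$. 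Hence $f \in C^{\infty}_{\ge 0}(\rd,E) = \S0(\rd,E)$, closing the chain.

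The only point that requires some care beyond the scalar case is the justification of the Bochner-valued inversion formula and the exchange of $\partial^{\alpha}$ with the integral; however, since the envisioned dominating functions are scalar-valued and integrable in $(x,\xi)$, these steps reduce to standard dominated convergence for Bochner integrals and do not rely on any additional geometric property of $E$ (in particular, neither UMD nor Hilbert-space structure is needed here).
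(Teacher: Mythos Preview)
Your proof is correct and follows precisely the approach indicated by the paper, which simply refers to the scalar arguments in \cite[Thm.~14.5.3]{gro book} and \cite[Lem.~6.1]{gro rze} and instructs the reader to replace $|\cdot|$ by $|\cdot|_E$. In fact you have supplied more detail than the paper itself, including the explicit justification of the Bochner inversion and differentiation under the integral, so there is nothing to add.
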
	

\begin{corollary}\label{cor S0}
Let $\sigma \in \S0(\rdd,\cL(H))$. The Weyl operator $\sigma^{\w}$ is bounded on $\cM^{p,q}_{r,s}(\rd,H)$ for any $1\le p,q \le \infty$ and $r,s \in \bR$. 
\end{corollary}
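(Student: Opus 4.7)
The plan is to derive the corollary as an immediate consequence of Theorem \ref{bound weyl} combined with the characterization of $\S0$ given in Proposition \ref{S0 in sjo}. The only content is to verify that the hypothesis of Theorem \ref{bound weyl} is met for every choice of weight parameters, which is exactly what $\S0$ provides.

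Concretely, fix $1 \le p,q \le \infty$ and $r,s \in \bR$. I would choose $\gamma \coloneqq \max\{0, |r|+|s|\}$, so that $\gamma \ge 0$ and $|r|+|s| \le \gamma$. By Proposition \ref{S0 in sjo}, the assumption $\sigma \in \S0(\rdd, \cL(H))$ means in particular that $\sigma \in M^{\infty,1}_{0,2\gamma}(\rdd, \cL(H))$, since $2\gamma \ge 0$. Theorem \ref{bound weyl} then applies with this value of $\gamma$ and yields the boundedness of $\sigma^{\w}$ on $\cM^{p,q}_{r,s}(\rd,H)$.

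There is no genuine obstacle here: the corollary is just the observation that H\"ormander-type symbols are regular enough to feed into the Sj\"ostrand-class boundedness result \emph{at every order of weights simultaneously}, so the constraint $|r|+|s|\le \gamma$ from Theorem \ref{bound weyl} can always be absorbed by choosing $\gamma$ large enough. Note that the statement is only for the modulation-space scale $\cM^{p,q}_{r,s}$; if one wanted the Wiener amalgam counterpart $\cW^{p,q}_{r,s}$ as well, the same one-line argument would work verbatim via the second half of Theorem \ref{bound weyl}.
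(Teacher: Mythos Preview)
Your argument is correct and is exactly the intended one: the paper states the corollary without proof precisely because it follows immediately from Theorem \ref{bound weyl} once Proposition \ref{S0 in sjo} guarantees $\sigma \in M^{\infty,1}_{0,2\gamma}$ for every $\gamma\ge 0$. The only cosmetic remark is that $\max\{0,|r|+|s|\}=|r|+|s|$, so the $\max$ is unnecessary.
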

\subsection{Narrow convergence} Convergence in $M^{\infty,1}$ norm is a very strong requirement. For instance it is well known that $C^{\infty}_c$ is not dense $M^{\infty,1}$ with the norm topology \cite{sjo}; this fact inhibits the standard approximation arguments and leads to restrict to subspaces such as $\cM^{\infty,1}$. Another way to cope with this problem consists in weakening the notion of convergence as follows \cite{cnr rough 6,toft cont 1}.

\begin{definition}\label{nar conv} 
Let $\Omega$ be a subset of some Euclidean space and $s\in\bR$. The map $\Omega \ni \nu \mapsto \sigma_\nu \in M^{\infty,1}_{0,s}(\rd,E)$ is said to be continuous for the narrow convergence if: 
\begin{enumerate}
\item it is a continuous map in $\cS'(\rd,E)$ (weakly), and
\item there exists a function $h \in L^1_s(\rd)$ such that for some (hence any) nonzero window $g\in \cS(\rd)$ one has $\sup_{z\in\rd}| V_g\sigma_\nu(x,\xi)|_E \le h(\xi)$ for any $\nu \in \Omega$ and a.e. $\xi \in \rd$.
\end{enumerate}
\end{definition}

The benefits of narrow continuity in the scalar-valued case carry over to the Hilbert-valued case. The following property will be used below. 

\begin{theorem} \label{nar conv weyl}
For any $1 \le p,q \le \infty$ and $\gamma \ge 0$, $r,s \in \bR$ such that $|r|+|s| \le \gamma$, let $X$ denote either $\cM^{p,q}_{r,s}(\rd,H)$ or $\cW^{p,q}_{r,s}(\rd,H)$. If $\Omega \ni \nu \mapsto \sigma_\nu \in M^{\infty,1}_{0,2\gamma}(\rd,\cL(H))$ is continuous for the narrow convergence then the corresponding map of operators $\nu \mapsto \sigma_\nu^\w$ is strongly continuous on $X$. \end{theorem}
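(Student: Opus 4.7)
The plan is to combine uniform boundedness with a density reduction, and then to conclude through a double application of the dominated convergence theorem at the level of the short-time Fourier transform.

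First, the narrow convergence hypothesis (condition (2) of Definition \ref{nar conv}) delivers a uniform bound $\sup_{\nu\in\Omega}\lV\sigma_\nu\rV_{M^{\infty,1}_{0,2\gamma}}\le \lV h\rV_{L^1_{2\gamma}}<\infty$. Coupled with Theorem \ref{bound weyl}, this yields a uniform operator-norm bound $C\coloneqq\sup_\nu \lV\sigma_\nu^\w\rV_{X\to X}<\infty$. Since $\cS(\rd,H)$ is by construction dense in $X$ (see Remark \ref{rem dual mod sp} and the analogous statement for amalgam spaces), a standard three-$\varepsilon$ argument shows that it suffices to prove $\sigma_\nu^\w\phi\to\sigma_{\nu_0}^\w\phi$ in $X$ for each fixed $\phi\in\cS(\rd,H)$ and each $\nu\to\nu_0$ in $\Omega$.

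Second, I would appeal to the kernel-type estimate that underlies the proof of Theorem \ref{bound weyl} (cf.\ \cite[Sec.~4]{wahlberg} and \cite[Ch.~14]{gro book}): for suitable fixed windows $g,G\in\cS(\rd)$ and $\Phi\in\cS(\rdd)$, there is a linear map $\mathcal{A}$ on $\rdd$ such that
\[
|V_g(\sigma^\w\phi)(z)|_H \lesssim \int_{\rdd} |V_\Phi\sigma(w)|_{\cL(H)}\, |V_G\phi(z-\mathcal{A}w)|_H\, dw,\qquad z\in\rdd.
\]
Applied with $\sigma=\sigma_\nu-\sigma_{\nu_0}$, condition (1) of narrow convergence gives $V_\Phi\sigma_\nu(w)\to V_\Phi\sigma_{\nu_0}(w)$ pointwise in $w$ (this is simply weak convergence in $\cS'$ evaluated on a fixed time-frequency shift of $\Phi$), while condition (2) provides the uniform envelope $|V_\Phi(\sigma_\nu-\sigma_{\nu_0})(y,\eta)|_{\cL(H)}\le 2h(\eta)$. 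Since $V_G\phi$ is Schwartz on $\rdd$, dominated convergence in $w$ yields $V_g((\sigma_\nu-\sigma_{\nu_0})^\w\phi)(z)\to 0$ pointwise in $z$, together with the uniform-in-$\nu$ envelope
\[
|V_g(\sigma_\nu^\w\phi-\sigma_{\nu_0}^\w\phi)(z)|_H \lesssim \int_{\rdd} 2h(\eta)\,|V_G\phi(z-\mathcal{A}(y,\eta))|_H\, dy\,d\eta =: K(z).
\]

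Finally, it remains to show that $K$ lies in the mixed-Lebesgue space defining the $X$-norm, so that a second application of dominated convergence on the outer integral closes the argument. Here is where the constraint $|r|+|s|\le\gamma$ comes in: the Schwartz decay of $V_G\phi$ absorbs the weights $\la z\ra^{r}$ and $\la z\ra^{s}$, while the weight $2\gamma$ on $h$ controls integrability in the $\eta$-direction after unravelling $\mathcal{A}$. The main obstacle is exactly this last bookkeeping step, which essentially reproduces --- now with uniform control in $\nu$ --- the computation already carried out for Theorem \ref{bound weyl}. Once $K$ has been placed in the relevant mixed-Lebesgue space, the conclusion $\lV\sigma_\nu^\w\phi-\sigma_{\nu_0}^\w\phi\rV_X\to 0$ follows at once, and the three-$\varepsilon$ reduction of the first step finishes the proof.
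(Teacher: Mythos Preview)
Your proposal is correct, and for the modulation-space case $X=\cM^{p,q}_{r,s}$ it is precisely the strategy the paper has in mind when it writes ``a suitable adaption of the one given in \cite[Prop.~3]{cnr rough 6}'': uniform bound from condition~(2), density reduction to Schwartz data, kernel estimate for $V_g(\sigma^\w\phi)$ in terms of $V_\Phi\sigma$, and dominated convergence.

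Where your approach diverges is the Wiener amalgam case $X=\cW^{p,q}_{r,s}$. You propose to rerun the same STFT analysis with the reversed integration order and then redo the bookkeeping to place the envelope $K$ in the corresponding mixed-Lebesgue space. The paper instead bypasses this entirely via the symplectic covariance identity $\sigma^\w=\cF(\sigma_J)^\w\cF^{-1}$ already used in the proof of Theorem~\ref{bound weyl}: since $\cF$ is an isomorphism $\cM^{p,q}_{r,s}\to\cW^{p,q}_{r,s}$, strong continuity of $\nu\mapsto\sigma_\nu^\w$ on $\cW^{p,q}_{r,s}$ follows immediately from strong continuity of $\nu\mapsto(\sigma_\nu)_J^\w$ on $\cM^{p,q}_{r,s}$, and narrow continuity is trivially preserved under composition with $J$. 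Your direct route is not wrong, but the paper's reduction is more economical and avoids the very ``bookkeeping step'' you flag as the main obstacle.
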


\begin{proof}
The proof for $X=\cM^{p,q}_{r,s}(\rd,H)$ is a suitable adaption of the one given in \cite[Prop. 3]{cnr rough 6}. For the strong continuity on $X=\cW^{p,q}_{r,s}(\rd,H)$ we reduce to the latter case by the same arguments in the proof of Proposition \ref{bound weyl}, which imply that $\sigma_\nu^\w u = \cF (\sigma_\nu)_J^\w \cF^{-1}u$ for $u \in \cW^{p,q}_{r,s}(\rd,H)$. The claimed result easily follows from the continuity of the map $\nu \mapsto (\sigma_\nu)_J^\w \cF^{-1}u$ on $\cM^{p,q}_{r,s}(\rd,H)$.
\end{proof} 
\section{Estimates for the Dirac propagator}
\subsection{The free case}
Consider the Cauchy problem for the free Dirac equation, namely \eqref{dirac eq} with $V=0$: 
\begin{equation}\label{dirac free eq}
\begin{cases}
i\partial_{t}\psi\left(t,x\right)=\Dm \psi\left(t,x\right), \\
\psi\left(0,x\right)=\psi_{0}\left(x\right),
\end{cases} \qquad \left(t,x\right)\in\mathbb{R}\times\mathbb{R}^{d}.
\end{equation}
The solution can be recast in terms of the free Dirac propagator:  
\begin{equation}\label{U0} \psi(t,x)=\psi_0(x), \qquad U_0(t)=e^{-it\Dm}. \end{equation}
We can take advantage from the framework developed insofar by noticing that $U_0(t)$ is an operator-valued Fourier multiplier on the Hilbert space $H=\bC^n$, $\cL(\bC^n) \simeq \bC^{n\times n}$, with symbol 
\[ \mu_t(\xi) = \exp{\left[-2\pi i t\lc m\alpha_0 + \sum_{j=1}^d \xi_j \alpha_j \rc\right]}. \] 
An explicit expression for this matrix can be derived. After setting $C_j = - 2\pi t \xi_j$, $j=1,\ldots,d$, and $C_0 = -2 \pi tm$ we have $\mu_t(\xi) = \sum_{n\ge 0} \frac{i^n}{n!}(\sum_{j=0}^d C_j \alpha_j)^n$. The identities \eqref{dirac matrix def} satisfied by the Dirac matrices imply that 
\[ 
\begin{cases}
(\sum_{j=0}^d C_j \alpha_j)^n = (-1)^k(\sum_{j=0}^d C_j^2)^k I_n & (n=2k), \\
(\sum_{j=0}^d C_j \alpha_j)^n = i(-1)^k(\sum_{j=0}^d C_j^2)^k(\sum_{j=0}^d C_j \alpha_j) & (n=2k+1).
\end{cases}
\]
A straightforward computation finally yields

\begin{equation} \label{dirac mult} \mu_t(\xi) = \cos(2 \pi t\la \xi \ra_m) I_n -2\pi i \frac{\sin(2\pi t\la \xi \ra_m)}{2\pi \la \xi \ra_m} \lc m\alpha_0+ \sum_{j=1}^d \xi_j\alpha_j \rc, \end{equation}
from which it is clear that $\mu_t \in \S0(\rd,\mcn)$ for any fixed $t \in \bR$. 

\begin{proof}[Proof of Theorem \ref{maint free}] The proof is a direct application of Proposition \ref{fou mult mod} ($X=M^{p,q}_{r,s}(\cn)$) or Proposition \ref{fou mult was} ($X=W^{p,q}_{r,s}(\cn)$), after noticing that 
	\[ \mu_t \in \S0(\rd,\mcn) \hookrightarrow M^{\infty,1}_{0,|r|}(\rd,\mcn)\hookrightarrow W^{1,\infty}_{|r|,0}(\rd,\mcn), \quad \forall r\in \bR, \] the latter embedding being given by the Hausdorff-Young inequality \eqref{gen hy}. 
\end{proof}

\begin{proof}[Proof of estimate \eqref{C(t) estimate free}] In order to determine the time dependence of the constant $C_X(t)$, $X=M^{p,q}_{0,s}(\cn)$, we provide a different proof by making use of the discrete norm \eqref{mod disc norm} for modulation spaces. Consider the BUPU in the proof of Theorem \ref{was conv}. In view of \eqref{mod disc norm} we need to provide an estimate for $ \lV \lV \boxy_k U(t)f \rV_{L^p(\cn)} \rV_{\ell^q_s}$. We have 
	\[ \lV \boxy_k U(t)f \rV_{L^p(\cn)} = \sum_{|\ell|_{\infty} \le 1} \lV \sigma_{k+\ell} \mu_t \sigma_k \hat f \rV_{\cF L^p(\cn)} \le \sum_{|\ell|_{\infty} \le 1} \lV \sigma_{k+\ell} \mu_t \rV_{\cF L^1(\mcn)} \lV \boxy_k f \rV_{L^p(\cn)},  \]
 where we used the approximate orthogonality of the frequency-uniform decomposition operators:
\[ \boxy_k = \sum_{|\ell|_{\infty} \le 1}\boxy_k \boxy_{k+\ell}, \quad k \in \zd. \]
The multiplier estimate \eqref{bernstein est} implies
\[ \lV \sigma_{k+\ell} \mu_t \rV_{\cF L^1(\mcn)} = \lV \sigma_{0} T_{-(k+\ell)}\mu_t \rV_{\cF L^1(\mcn)} \lesssim (1+|t|)^{d/2},  \] and complex interpolation with the conservation law $ \lV \boxy_k U(t)f \rV_{L^2(\cn)} = \lV \boxy_k f \rV_{L^2(\cn)}$ yields 
\[ \lV \boxy_k U(t)f \rV_{L^p(\cn)} \lesssim (1+|t|)^{d|1/2-1/p|} \lV \boxy_k f \rV_{L^p(\cn)}. \] 
\end{proof}

This behaviour is not surprising, given that any component of a solution of the free Dirac equation is also a solution of the free Klein-Gordon equation, for which similar estimates hold \cite[Prop. 6.8]{wang book}. This connection can be exploited in many ways, as already mentioned in the Introduction; as an example one can easily prove a smoothing estimate for the free Dirac propagator. 

\begin{theorem} \label{smoothing est}
	Let $\psi(t,x)$ be the solution of \eqref{dirac free eq}. For any $t > 1$, $1\le p,q \le \infty$ and $s\in \bR$, 
\begin{equation} \label{smooth eq}
\lV \psi(t,\cdot) \rV_{M^{p,q}_{0,s}(\cn)} \lesssim \lV \psi_0 \rV_{M^{p,q}_{0,s}(\cn)} + |t|^{\gamma} \lV \psi_0 \rV_{M^{p,q}_{0,s-\gamma}(\cn)}, \qquad \gamma=d|1/2-1/p|.  
\end{equation}
\end{theorem}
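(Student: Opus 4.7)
The plan is to reduce the problem to the scalar Klein-Gordon smoothing estimate on modulation spaces by factoring the Dirac symbol. Applying Euler's formula in \eqref{dirac mult} one obtains
\[
\mu_t(\xi) = e^{-2\pi i t \la\xi\ra_m}\,P_+(\xi) + e^{2\pi i t \la\xi\ra_m}\,P_-(\xi),
\]
where
\[
P_\pm(\xi) \coloneqq \tfrac{1}{2}\lc I_n \pm \la\xi\ra_m^{-1}\lc m\alpha_0 + \sum_{j=1}^d \xi_j \alpha_j\rc\rc.
\]
Exploiting the identity $\lc m\alpha_0+\sum_j \xi_j \alpha_j\rc^2 = \la\xi\ra_m^2 I_n$, which follows at once from \eqref{dirac matrix def}, one checks that $P_\pm$ are the spectral projections onto the positive/negative energy subspaces of the Dirac symbol; in particular, assuming $m>0$, they belong to $\S0(\rd,\mcn)$. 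Consequently
\[ U_0(t)\psi_0 = e^{-2\pi i t\la D\ra_m}\lc P_+(D)\psi_0\rc + e^{2\pi i t \la D\ra_m}\lc P_-(D)\psi_0\rc. \]

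Next I would invoke the scalar smoothing estimate for the Klein-Gordon propagator on modulation spaces \cite[Prop. 6.8]{wang book}, namely: for $|t|>1$, $1\le p,q\le \infty$ and $s\in\bR$,
\[
\lV e^{\pm 2\pi it\la D\ra_m}f\rV_{M^{p,q}_{0,s}} \lesssim \lV f\rV_{M^{p,q}_{0,s}} + |t|^{\gamma}\lV f\rV_{M^{p,q}_{0,s-\gamma}},\qquad \gamma=d|1/2-1/p|.
\]
Since $e^{\pm 2\pi it\la D\ra_m}$ is a scalar Fourier multiplier, it acts componentwise on vector-valued functions, hence the estimate transfers without modification to the $\cn$-valued setting $M^{p,q}_{0,s}(\rd,\cn)$.

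Finally, by Proposition \ref{S0 in sjo} we have $P_\pm\in M^{\infty,1}_{0,\ell}(\rd,\mcn)$ for every $\ell\ge 0$, and by \eqref{gen hy} then $P_\pm \in W^{1,\infty}_{0,\ell}(\rd,\mcn)$; Proposition \ref{fou mult mod} thus guarantees that $P_\pm(D)$ is bounded on every $M^{p,q}_{0,s}(\rd,\cn)$. Combining this uniform boundedness with the Klein-Gordon estimate applied to $P_\pm(D)\psi_0$ at the smoothness levels $s$ and $s-\gamma$, then using the triangle inequality in the above splitting, yields the claim. The main technical point is that the construction $P_\pm\in\S0$ requires $m>0$; in the massless case the symbol fails to be smooth at $\xi=0$ and one would insert a smooth frequency cutoff separating a neighborhood of the origin (whose contribution is harmless by Theorem \ref{maint free}) from the high-frequency regime, where $P_\pm$ does belong to $\S0$.
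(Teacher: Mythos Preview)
Your proposal is correct and follows essentially the same route as the paper: projection onto the positive/negative energy subspaces of the Dirac operator reduces the problem to the scalar Klein-Gordon smoothing estimate on modulation spaces, and the boundedness of the projections on $M^{p,q}_{0,s}$ finishes the argument. The only noteworthy discrepancy is bibliographic: the paper invokes the smoothing estimate for $e^{it\la D\ra_m}$ from \cite[Thm.~1.4]{deng} rather than \cite[Prop.~6.8]{wang book}, and you should double-check that the latter actually provides the precise $|t|^{\gamma}$--for--$\gamma$-derivatives trade-off you state (also, the Hausdorff-Young embedding \eqref{gen hy} sends $M^{\infty,1}_{0,\ell}$ to $W^{1,\infty}_{\ell,0}$, not $W^{1,\infty}_{0,\ell}$, though for $r=0$ in Proposition~\ref{fou mult mod} only $\ell=0$ is needed anyway).
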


\begin{proof}
Following the same strategy of \cite[Thm. 1.1]{kn},	namely projection onto the so-called positive and negative energy subspaces of the Dirac operator (cf. \cite{thaller}), it turns out that the free Dirac equation \eqref{dirac free eq} is unitarily equivalent to a pair of $(n/2)$-dimensional square-root Klein-Gordon equations, namely
\[ \begin{cases}
i\partial_{t}\psi_\pm(t,x)= \pm \la D \ra_m \psi_\pm(t,x), \\
\psi_\pm(0,x)=(\psi_{0})_\pm(x),
\end{cases} \qquad (t,x)\in\mathbb{R}\times\mathbb{R}^{d}. \]
It is then enough to replace the estimate $(3.2)$ in that paper for the Klein-Gordon semigroup $e^{it\la D \ra_m}$ with the smoothing one proved in \cite[Thm. 1.4]{deng}. The proof then proceeds in the same way.
\end{proof}
%
\subsection{The case where $V$ is a rough bounded potential} For any $1\le p,q \le \infty$, $\gamma \ge 0$ and $r,s \in \bR$ such that $|r|+|s|\le \gamma$, let $X$ denote either $\cM^{p,q}_{r,s}(\cn)$ or $\cW^{p,q}_{r,s}(\cn)$. Let $T>0$ be fixed and consider now the Cauchy problem for the Dirac equation with potential
\begin{equation}
\begin{cases}
i\partial_{t}\psi(t,x)=\left(\Dm+V(t)\right)\psi(t,x)\\
\psi(0,x)=\psi_{0}(x)
\end{cases}\qquad(t,x)\in\bR\times\rd,\label{dirac eq pot}
\end{equation}
where $V(t)=\sigma(t,\cdot)^{\w}$, $t \in [0,T]$, and the map $t\mapsto\sigma(t,\cdot)$
is continuous in $M^{\infty,1}_{0,2\gamma}(\rdd,\mcn)$ for the narrow convergence.
Standard arguments from the theory of operators semigroups (cf. \cite[Cor. 1.5]{engel}) and Theorem \ref{bound weyl} imply that for any fixed $t\in\bR$ the propagator $U(t)$ is bounded on $X$. 
\begin{proof}[Proof of Theorem \ref{maint pert}] The argument is standard, we sketch the strategy for the sake of clarity. Set $\Xi_{T}=C\left([0,T];\mathcal{L}_{s}(X)\right)$; the assumptions on $\sigma$ and Theorem \ref{nar conv weyl} imply that $V\in\Xi_{T}$. A straightforward computation shows that the propagator $U(t)$ corresponding to
	\eqref{dirac eq pot} satisfies the following Volterra integral equation: 
	\begin{equation}
	U(t)\psi_{0}=U_{0}\left(t\right)\psi_{0}-i\int_{0}^{t}U_{0}(t-s)V(s)U(s)\psi_{0}ds.\label{volterra}
	\end{equation}
A solution is given by an iterative scheme: let $\{U_n\}_{n\in\bN}$ the sequence of operators
	\[ U_0(t) \equiv e^{-it\Dm}, \qquad U_n(t)\psi_0 \coloneqq \int_0^t U_0(t-s)V(s)U_{n-1}(s)\psi_0\,ds. \]
	We have that $\{U_n\} \subset \Xi_T$, since $U_n = U_0*VU_{n-1}$ and both convolution and composition are bounded operators on $\Xi_T$; cf. \cite[Ex. 1.17.1 and Lem. B.15]{engel}. Furthermore, the following estimates hold: 
	\[ \lV U_n(t)\rV_{\cL(X)} \le K(t)^{(n+1)} \frac{t^n}{n!}, \qquad K(t)= \sup_{s \in [0,t]} \lV U_0(s)\rV \lV V(s)\rV. \]
It then follows that the Dyson-Phillips series $\sum_{n} U_n(t)$ converges with respect to the operator norm on $\cL(X)$ and also uniformly on $[0,T]$. Therefore $U(t)=\sum_n U_n(t) \in \Xi_T$ and $U(t)$ is a propagator for \eqref{dirac eq pot}. Uniqueness follows by Gronwall's lemma after noticing that a different solution $P(t)$ of \eqref{volterra} would satisfy
\[ \lV (U(t)-P(t))\psi_0\rV_{X} \le K(t) \int_0^t \lV (U(\tau)-P(\tau))\psi_0\rV_{X}d\tau. \]
\end{proof}

%

\subsection{The case where $V$ is a rough quadratic potential}

Theorem \ref{maint split mod} involves a rough potential $V$ with at most quadratic growth as in \eqref{V maint split}. A key ingredient for the proof of Theorem \ref{maint split mod} is the following lemma, which is a qualitative generalization of \cite[Lem. 3.3]{nicola trapasso}.

\begin{proposition}\label{sj decomp}
	Let $f:\rd \to E$ be such that $\partial^{\alpha}f\in M^{\infty,1}(\rd,E)$
	for any $\alpha\in\mathbb{N}^{d}$, $\left|\alpha\right|=k$ for some $k\in \bN$. Then there exist $f_{1}\in C^{\infty}_{\ge k}(\rd,E)$ and
	$f_{2}\in M^{\infty,1}(\rd,E)$ such that $f=f_{1}+f_{2}$. 
\end{proposition}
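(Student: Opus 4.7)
The plan is to produce the decomposition via a low/high frequency splitting on the Fourier side. Fix a cutoff $\phi\in C^\infty_c(\rd)$ with $\phi(\xi)=1$ for $|\xi|\le 1$ and $\phi(\xi)=0$ for $|\xi|\ge 2$, and set $f_1 := \phi(D)f$ and $f_2 := (1-\phi)(D)f$, so that $f=f_1+f_2$. The two membership claims are then verified separately; the case $k=0$ is trivial, so I will assume $k\ge 1$.

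For $f_1$: the distribution $\phi\,\hat f$ has compact support in $\rd$, so $f_1$ is smooth (in fact entire). For any multi-index with $|\alpha|\ge k$, I would pick $\beta\le\alpha$ with $|\beta|=k$ and rewrite
\[
(2\pi i\xi)^\alpha \phi(\xi)\,\hat f(\xi) = (2\pi i\xi)^{\alpha-\beta}\phi(\xi)\,\widehat{\partial^\beta f}(\xi),
\]
which, upon inverting the Fourier transform, gives
\[
\partial^\alpha f_1 = g_{\alpha,\beta}\,*\,\partial^\beta f, \qquad g_{\alpha,\beta} := \cF^{-1}\!\left[(2\pi i\xi)^{\alpha-\beta}\phi(\xi)\right]\in \cS(\rd).
\]
Since $g_{\alpha,\beta}\in L^1(\rd)$ and $\partial^\beta f\in M^{\infty,1}(\rd,E)\hookrightarrow L^\infty(\rd,E)$ by hypothesis, this convolution lies in $L^\infty(\rd,E)$, proving $f_1\in C^\infty_{\ge k}(\rd,E)$.

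For $f_2$ the key move is to express $1-\phi(\xi)$ as a finite linear combination of the monomials $\xi^\beta$, $|\beta|=k$, with suitable tempered coefficients. The multinomial identity $|\xi|^{2k}=\sum_{|\beta|=k}\binom{k}{\beta}\xi^{2\beta}$ suggests setting
\[
c_\beta(\xi) := \binom{k}{\beta}\,\frac{(1-\phi(\xi))\,\xi^\beta}{|\xi|^{2k}},
\]
which is well defined and smooth on all of $\rd$ because $1-\phi$ vanishes on a neighborhood of the origin; a direct derivative computation yields the symbol bounds $|\partial^\gamma c_\beta(\xi)|\lesssim (1+|\xi|)^{-k-|\gamma|}$, so $c_\beta\in\S0(\rd)$. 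With this choice one has $1-\phi(\xi)=\sum_{|\beta|=k} c_\beta(\xi)\,\xi^\beta$, and using $\xi^\beta\hat f=(2\pi i)^{-k}\widehat{\partial^\beta f}$ leads to
\[
f_2 = (2\pi i)^{-k}\sum_{|\beta|=k} c_\beta(D)\,\partial^\beta f.
\]

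It remains to show that each $c_\beta(D)$ acts continuously on $M^{\infty,1}(\rd,E)$. By Proposition \ref{S0 in sjo} we have $\S0(\rd)\subset M^{\infty,1}(\rd)$, and the Hausdorff--Young type embedding \eqref{gen hy} (with $p=\infty$, $q=1$) yields $M^{\infty,1}(\rd)\hookrightarrow W^{1,\infty}(\rd)$; thus $c_\beta \in W^{1,\infty}(\rd)$. Proposition \ref{fou mult mod}, applied with the scalar multiplication $\bC\times E\to E$ and $r=s=\delta=0$, then gives $c_\beta(D)\in\cL(M^{\infty,1}(\rd,E))$, and the hypothesis $\partial^\beta f\in M^{\infty,1}(\rd,E)$ produces $f_2\in M^{\infty,1}(\rd,E)$. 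The crux of the argument is the construction of the symbols $c_\beta$: one has to simultaneously verify global smoothness (the cutoff removes the apparent singularity at the origin) and the $\S0$-type decay of all their derivatives, after which the vector-valued Fourier multiplier theorem does the rest of the work.
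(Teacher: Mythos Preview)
Your proof is correct and follows the same overall strategy as the paper: a low/high frequency splitting $f=\phi(D)f+(1-\phi)(D)f$, with the $f_1$ part handled by writing higher derivatives as a compactly supported Fourier multiplier applied to some $\partial^\beta f$, and the $f_2$ part handled by producing $\S0$ symbols so that Proposition~\ref{fou mult mod} applies.

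The one genuine difference lies in how you manufacture the $\S0$ symbols for $f_2$. The paper builds a smooth partition of unity $\{\varphi_j\}$ on $S^{d-1}$ subordinate to the sets $\{x_j\neq 0\}$, extends it by $0$-homogeneity, and writes $f_2=\sum_j \widetilde{\chi}_j(D)\,\partial_j^k f$ with $\widetilde{\chi}_j(\xi)=(1-\chi(\xi))\varphi_j(\xi)/(2\pi i\xi_j)^k$; this uses only the \emph{pure} derivatives $\partial_j^k f$. You instead exploit the multinomial identity $|\xi|^{2k}=\sum_{|\beta|=k}\binom{k}{\beta}\xi^{2\beta}$ to write $1-\phi(\xi)=\sum_{|\beta|=k}c_\beta(\xi)\xi^\beta$ with $c_\beta\in S^{-k}_{1,0}\subset\S0$, which uses all mixed derivatives $\partial^\beta f$, $|\beta|=k$. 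Both constructions yield $\S0$ symbols after cutting off near the origin, so the conclusion via Proposition~\ref{S0 in sjo}, the embedding \eqref{gen hy}, and Proposition~\ref{fou mult mod} is identical. Your route is arguably cleaner since it avoids the sphere partition of unity; the paper's route has the minor feature of needing only the hypothesis on pure derivatives, but that is not exploited elsewhere.
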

\begin{proof}
	Fix a smooth cut-off function $\chi\in C_{c}^{\infty}\left(\mathbb{R}^{d}\right)$
	supported in a neighbourhood of the origin and such that $\chi = 1$ near zero, then consider the Fourier multiplier $\chi(D)$ with symbol $\chi$. Set $f_{1}=\chi(D)f$ and $f_{2}=(I-\chi(D))f$. Clearly $f=f_{1}+f_{2}$	and we argue that $f_{1}$ and $f_{2}$ satisfy the claimed properties. 
	
	Indeed, $f_1 \in C^{\infty}(\rd,E)$ and for any $\alpha\in\mathbb{N}^{d}$, $\left|\alpha\right|=k$, we have
	\[ 	\partial^{\alpha}f_{1}=\partial^{\alpha}(\chi(D)f)=\chi(D)(\partial^{\alpha}f)\in M^{\infty,1}(\rd,E), \]
	since $\partial^{\alpha}\chi(D)$ is a Fourier multiplier with symbol $(2\pi i \xi)^{\alpha}\chi\left(\xi\right)\in C_{c}^{\infty}\left(\mathbb{R}^{d}\right)$,
	hence $\partial^{\alpha}\chi(D)=\chi(D)\partial^{\alpha}$ and $\chi(D)$ is continuous on $M^{\infty,1}(E)$ by Proposition \ref{fou mult mod}. Furthermore, similar arguments imply that for any $\alpha\in\mathbb{N}^{d}$, $\left|\alpha\right|\ge k$,
	\[\partial^{\alpha}f_{1}=\partial^{\alpha-\beta}\partial^{\beta}(\chi(D)f)=(\partial^{\alpha-\beta}\chi(D))(\partial^{\beta}f)\in M^{\infty,1}(\rd,E).	\]
	where $\beta\in\mathbb{N}^{d}$ satisfies $\left|\beta\right|=k$. 
	
	
	In order to prove the claim for $f_2$ consider the finite smooth partition
	of unity $\left\{ \varphi_{j}\right\} _{j=1}^{N}$ of the unit sphere
	$S^{d-1}\subset\mathbb{R}^{d}$ subordinated to the open cover $\{U_j\}_{j=1}^d$, where \[ U_j = \{x \in S^{d-1} \,:\, x_j \neq 0 \}. \]
Then we extend each function $\varphi_{j}$ on $\mathbb{R}^{d}\setminus\left\{ 0\right\} $ by zero-degree homogeneity, namely \[ \sum_{j=1}^{d}\varphi_{j}\left(x\right)=1,\qquad\varphi_{j}\left(\alpha x\right)=\varphi_{j}\left(x\right),\qquad\forall x\in S^{d-1},\,\alpha>0. \] This procedure gives a finite partition of unity $\left\{ \varphi_{j}\right\} _{k=1}^{d}$
	on $\mathbb{R}^{d}\backslash\left\{ 0\right\} $. Then
	\begin{flalign*}
f_{2}(x) & =\int_{\rd}e^{2\pi ix\cdot\xi}(1-\chi(\xi))\hat{f}(\xi)d\xi\\
& =\sum_{j=1}^{d}\left[\int_{\rd}e^{2\pi ix\cdot\xi}\left(\frac{1-\chi(\xi)}{(2\pi i\xi_j)^{k}}\varphi_{j}(\xi)\right)\widehat{\partial_j^{k}f}(\xi)d\xi\right]\\
& =\sum_{j=1}^{d}\widetilde{\chi_j}(D)(\partial^k_j f)(x)
	\end{flalign*}
and thus $f_2 \in M^{\infty,1}(\rd,E)$ since each $\wt{\chi_j}(D)$ is a Fourier multiplier with symbol $\left(1-\chi\left(\xi\right)\right)\varphi_{j}\left(\xi\right)/(2\pi i\xi_j)^{k}\in S_{0,0}^{0}\left(\mathbb{R}^{d}\right)$,
	hence bounded on $M^{\infty,1}(\rd,E)$. 
\end{proof}

\begin{proof}[Proof of Theorem \ref{maint split mod}] 
We apply Proposition \ref{sj decomp} twice, namely to $L$ and $Q$. We get
	\begin{itemize}
	\item $L=L_1 + L_2$, where $L_1 \in C^{\infty}_{\ge 1}(\rd,\mcn)$ and $L_2 \in M^{\infty,1}(\rd,\mcn)$, and
	\item $Q=Q_1+Q_2$, where $Q_1 \in C^{\infty}_{\ge 2}(\rd)$ and $Q_2 \in M^{\infty,1}(\rd)$. 
\end{itemize}
The RHS of \eqref{dirac eq} then becomes 
\[ \cH =(\Dm + L_1 + Q_1) + (L_2 + Q_2 + \sigma^{\w}) \eqqcolon \cH_0 + V'.  \] We see that $e^{-it\cH_0}$ is a semigroup of bounded operators on $\cM^p(\rd,\bC^n)$ as a consequence of \cite[Thm. 1.2]{kn}. It is understood that we identify the multiplication by a function $f \in M^{\infty,1}(\rd,\bC)$ on $M^{p,q}(\cn)$ with the operator $fI_n \in \mcn$, hence by Remark \ref{minfty mult} we have
\[ \lV fu \rV_{M^{p,q}(\cn)} \le \lV fI_n \rV_{M^{\infty,1}(\mcn)} \lV u \rV_{M^{p,q}(\cn)} \asymp \lV f \rV_{M^{\infty,1}} \lV u \rV_{M^{p,q}(\cn)}. \]  

 The boundedness of $e^{-it\cH}$ on $\cM^p(\rd,\cn)$ then follows from the fact that $V'$ is a bounded perturbation of $\cH_0$ \cite[Cor. 1.5]{engel} by Proposition \ref{fou mult mod} and Theorem \ref{bound weyl}. 
The case where $Q=0$ follows by the same arguments. 
\end{proof} 

\section{The nonlinear equation}
A standard tool in the study of local well-posedness is the following abstract result.
\begin{theorem}[{\cite[Prop. 1.38]{tao book}}]\label{abs nonlin}
	Let $X$ and $Y$ be two Banach spaces and $D:X\to Y$ be a bounded linear operator such that 
	\begin{equation}
	\lV Du \rV_Y \le C_0 \lV u \rV_X, \label{abs cond 1}
	\end{equation} 
	for all $u \in X$ and some $C_0 >0$. Consider then a nonlinear operator $F: Y\to X$, $F(0)=0$, such that
	\begin{equation}
	\lV F(u)-F(v) \rV_X \le \frac{1}{2C_0} \lV u-v \rV_Y, \label{abs cond 2}
	\end{equation} 
	for all $u,v$ in the ball $B_{\epsilon}(0) = \{u \in Y : \lV u \rV_Y \le \epsilon \}$ for some $\epsilon >0$. 
	Then for any $u_0 \in B_{\epsilon/2}$ there exists a unique solution $u \in B_{\epsilon}$ to the equation 
	\[ u=u_0 + DF(u), \]
	and the map $u_0 \mapsto u$ is Lipschitz with constant at most $2$, that is $\lV u \rV_Y \le 2 \lV u_0 \rV_Y$. 
\end{theorem}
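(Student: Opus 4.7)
The plan is to prove this by a direct application of the Banach fixed point theorem to the map $\Phi: B_\epsilon(0) \subset Y \to Y$ defined by $\Phi(u) \coloneqq u_0 + DF(u)$. Two things must be verified: that $\Phi$ maps $B_\epsilon$ into itself, and that $\Phi$ is a strict contraction on $B_\epsilon$ with respect to the $Y$-norm. Once these are established, $\Phi$ has a unique fixed point in the closed ball $B_\epsilon$, which is precisely the asserted solution of $u = u_0 + DF(u)$. The hypothesis $F(0)=0$ is what ties the Lipschitz bound \eqref{abs cond 2} to a size estimate, since for any $u \in B_\epsilon$ one has $\lV F(u)\rV_X = \lV F(u)-F(0)\rV_X \le (2C_0)^{-1}\lV u \rV_Y$.

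For stability under $\Phi$, I would estimate, for $u_0 \in B_{\epsilon/2}$ and $u \in B_\epsilon$,
\[ \lV \Phi(u) \rV_Y \le \lV u_0 \rV_Y + \lV DF(u) \rV_Y \le \frac{\epsilon}{2} + C_0 \lV F(u)\rV_X \le \frac{\epsilon}{2} + C_0 \cdot \frac{1}{2C_0} \lV u \rV_Y \le \epsilon, \]
using \eqref{abs cond 1} and the observation above. For the contraction property, for $u,v \in B_\epsilon$,
\[ \lV \Phi(u)-\Phi(v)\rV_Y = \lV D(F(u)-F(v))\rV_Y \le C_0 \lV F(u)-F(v)\rV_X \le \tfrac{1}{2} \lV u-v \rV_Y, \]
combining \eqref{abs cond 1} and \eqref{abs cond 2}. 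Banach's fixed point theorem in the complete metric space $B_\epsilon$ (closed in $Y$) then yields a unique $u \in B_\epsilon$ with $u = \Phi(u)$.

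Finally, for the Lipschitz dependence, I would note that if $u_0,\tilde{u}_0 \in B_{\epsilon/2}$ produce fixed points $u,\tilde{u} \in B_\epsilon$ respectively, then
\[ \lV u - \tilde{u}\rV_Y \le \lV u_0 - \tilde{u}_0\rV_Y + \lV D(F(u)-F(\tilde{u}))\rV_Y \le \lV u_0 - \tilde{u}_0\rV_Y + \tfrac{1}{2}\lV u - \tilde{u}\rV_Y, \]
so that $\lV u-\tilde{u}\rV_Y \le 2 \lV u_0-\tilde{u}_0\rV_Y$. The specialization $\tilde{u}_0 = 0$, which forces $\tilde{u}=0$ by uniqueness, recovers $\lV u \rV_Y \le 2 \lV u_0\rV_Y$. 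Honestly there is no serious obstacle here: the only mild subtlety is the bookkeeping ensuring $\epsilon/2 + \epsilon/2 = \epsilon$ exactly, which is why the hypothesis is phrased with $u_0 \in B_{\epsilon/2}$ rather than $B_\epsilon$.
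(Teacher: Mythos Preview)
Your proof is correct and is the standard contraction mapping argument. Note, however, that the paper does not actually prove this theorem: it is stated with a citation to \cite[Prop. 1.38]{tao book} and used as a black box in the proof of Theorem \ref{maint nonlin}. The argument you wrote is essentially the one in Tao's book, so there is nothing to compare.
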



With that in mind, for the sake of clarity we anticipate some estimates for the nonlinearity \eqref{gen nonlin}.

\begin{lemma}\label{nonlin gen lemma}
		Let $r,s\ge0$, $1\le p \le \infty$ and $\epsilon >0$, and consider a nonlinear function $F$ as in \eqref{gen nonlin}. Denote by $X$ any of the spaces $M^{p,1}_{0,s}(\cn)$ or $W^{1,p}_{r,s}(\cn)$. If $\psi_0 \in X$ then $F(\psi) \in X$ and, for any $\psi,\phi \in B_{\epsilon}(0)\subset X$ there exists a constant $C_{\epsilon} >0$ such that 
		\[ \lV F(\psi) - F(\phi) \rV_{X} \le C_{\epsilon} \lV \psi - \phi \rV_{X}. \] 
\end{lemma}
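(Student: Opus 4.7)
The plan is to exploit the fact that both choices of $X$ are Banach algebras under componentwise pointwise multiplication, then expand $F$ monomial by monomial and sum, using a telescoping argument for the Lipschitz bound.

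First I would reduce the vector-valued problem to the scalar case via Proposition \ref{mod sp prop}(iv) (and its Wiener amalgam analogue), so that it suffices to work with products and conjugates of scalar components. For $X=M^{p,1}_{0,s}(\rd,\bC)$ with $s\ge 0$, I would use the window-product identity
\[ V_{g_1 g_2}(fh)(x,\cdot) = V_{g_1}f(x,\cdot) *_\xi V_{g_2}h(x,\cdot), \]
combine it with Young's inequality on $\xi$ with the sub-multiplicative weight $\la\xi\ra^s$, and then apply Hölder in $x$ together with the embedding $M^{p,1}_{0,s}\hookrightarrow M^{\infty,1}_{0,s}$ coming from Proposition \ref{mod sp prop}(iii). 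This gives $\lV fh\rV_X \le C_X \lV f\rV_X \lV h\rV_X$. For $X=W^{1,p}_{r,s}(\rd,\bC)$ with $r,s\ge 0$ the same STFT factorization gives frequency-side control with the weight $\la\xi\ra^r$; the position-side Hölder step is then afforded by the discrete norm \eqref{was disc norm} and the trivial $\ell^p_s\hookrightarrow \ell^\infty$ (valid for $s\ge 0$, $1\le p\le \infty$), which yields $W^{1,p}_{r,s}\hookrightarrow W^{1,\infty}_{r,0}$. Invariance $\lV\bar f\rV_X = \lV f\rV_X$ follows at once from the STFT identity $V_g\bar f(x,\xi)=\overline{V_{\bar g}f(x,-\xi)}$.

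With the algebra property in hand, iteration gives $\lV \psi^\alpha\bar\psi^\beta\rV_X \le C_X^{|\alpha|+|\beta|-1}\lV\psi\rV_X^{|\alpha|+|\beta|}$ for every multi-index. Since $F$ is entire on $\cn$, the coefficients satisfy $\sum_{\alpha,\beta}|c^j_{\alpha,\beta}|R^{|\alpha|+|\beta|}<\infty$ for every $R>0$, so term-by-term summation of the power series \eqref{gen nonlin} yields an absolutely convergent expansion for $F(\psi)$ in $X$ with bound a majorant series evaluated at $R=\lV\psi\rV_X$; in particular $F(\psi)\in X$.

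For the Lipschitz estimate, I would use the basic telescoping
\[ \psi^\alpha\bar\psi^\beta - \phi^\alpha\bar\phi^\beta = (\psi^\alpha-\phi^\alpha)\bar\psi^\beta + \phi^\alpha(\bar\psi^\beta-\bar\phi^\beta), \]
and further telescope each factor so that the difference becomes a sum of at most $|\alpha|+|\beta|$ terms, each containing exactly one factor of the form $\psi_k-\phi_k$ or $\bar\psi_k-\bar\phi_k$ and a monomial of total degree $|\alpha|+|\beta|-1$ in the coordinates of $\psi,\phi,\bar\psi,\bar\phi$. Applying the algebra bound to each such term with $\lV\psi\rV_X,\lV\phi\rV_X\le\epsilon$ gives
\[ \lV \psi^\alpha\bar\psi^\beta - \phi^\alpha\bar\phi^\beta\rV_X \;\le\; (|\alpha|+|\beta|)\,C_X^{|\alpha|+|\beta|-1}\epsilon^{|\alpha|+|\beta|-1}\lV \psi-\phi\rV_X, \]
and summation over $\alpha,\beta$ produces a finite constant $C_\epsilon$ (essentially the derivative majorant series of $F$ at $\epsilon$, convergent by entirety of $F$). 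The main obstacle is the algebra property for $W^{1,p}_{r,s}$: Remark \ref{minfty mult} only gives the pointwise product result when one factor lies in an $M^{\infty,1}$-type space, and it is the simple but not entirely obvious embedding $W^{1,p}_{r,s}\hookrightarrow W^{1,\infty}_{r,0}$ via the discrete decomposition norm that bridges this gap.
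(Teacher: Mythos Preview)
Your proposal is correct and follows essentially the same strategy as the paper: reduce to scalar components via Proposition \ref{mod sp prop}(iv), invoke the Banach algebra property of $X$, and expand $F(\psi)-F(\phi)$ into terms linear in $\psi-\phi$. The only cosmetic differences are that the paper cites the algebra property from \cite{cn wave 3} rather than re-deriving it, and it produces the linear-in-difference expansion via the integral identity $F_j(\psi)-F_j(\phi)=\int_0^1 \tfrac{d}{dt}F_j(t\psi+(1-t)\phi)\,dt$ instead of your algebraic telescoping; both devices lead to the same final estimate.
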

\begin{proof}
	In view of Proposition \eqref{mod sp prop} $(iv)$ and its counterpart for amalgam spaces the first claim is an easy consequence of the algebra property of $X$ under pointwise multiplication \cite[Lem. 2.1-2.2]{cn wave 3} and the series expansion of each component. The estimate in the second part follows from a straightforward computation (cf. the proof of \cite[Thm. 4.1]{cn wave 3}), that is 
	\begin{align*} F_j(\psi)-F_j(\phi) & = \int_0^1 \frac{d}{dt} F_j(t \psi +(1-t)\phi) dt \\
	& = \sum_{k=1}^n \left[ (\psi_k-\phi_k)\sum_{\alpha,\beta,\gamma,\delta \in \bN^n} c^{j,k}_{\alpha,\beta,\gamma,\delta} \psi^\alpha \bar{\psi}^\beta \phi^\delta \bar{\phi}^\gamma \right. \\ & \left. + (\bar{\psi}_k - \bar{\phi}_k)\sum_{\alpha,\beta,\gamma,\delta \in \bN^n} \tilde{c}^{j,k}_{\alpha,\beta,\gamma,\delta \in \bN^n} \psi^{\alpha} \bar{\psi}^{\beta} \phi^{\delta} \bar{\phi}^{\gamma} \right]. \end{align*}
	Again by Proposition \ref{mod sp prop} $(iv)$ we have 
	\[ \lV F(\psi) - F(\phi) \rV_{X}  \lesssim \lV \psi - \phi \rV_{X}  \sum_{j,k=1}^n \sum_{\alpha,\beta,\gamma,\delta \in \bN^n} C^{j,k}_{\alpha,\beta,\gamma,\delta} \lV \psi\rV_{X}^{|\alpha+\beta|}\lV \phi\rV_{X}^{|\gamma+\delta|}, \]
	with $C^{j,k}_{\alpha,\beta,\gamma,\delta} = |c^{j,k}_{\alpha,\beta,\gamma,\delta}|+|\tilde{c}^{j,k}_{\alpha,\beta,\gamma,\delta}|$, and the latter expression is $\le C_{\epsilon} \lV \psi - \phi \rV_{X}$ whenever $\psi,\phi \in B_{\epsilon}(0)$. 
	\end{proof}

\begin{proof}[Proof of Theorem \ref{maint nonlin}] The proof is an application of the iteration scheme given in Theorem \ref{abs nonlin}. In particular we choose either $X = M^{p,1}_{0,s}(\cn)$ or $X=W^{1,p}_{r,s}(\cn)$, then $Y= C^0([0,T],X)$, and convert \eqref{dirac eq nonlin} in integral form:
	\[ \psi(t) = U_0(t)\psi_0 -i\int_0^t U_0(t-s)F(\psi(s))ds, \] where $U_0=e^{-it\cD_m}$ is the free propagator. It is then enough to prove \eqref{abs cond 1} and \eqref{abs cond 2} in this setting, where $D$ is the Duhamel operator $D = \int_0^t U_0(t-s) \cdot ds$. 
	First, notice that from Theorem \ref{maint free} we have that 
	\[ \lV U_0(t)\psi_0\rV_{X} \le C_T \lV \psi_0 \rV_{X}, \quad \forall t \in [0,T]. \] Therefore, \[ \lV \int_0^t U_0(t-s)u(s)ds \rV_{X} \le \int_0^t \lV U_0(t-s)u(s) \rV_{X}ds  \le T C_T \sup_{t \in [0,T]} \lV u(t)\rV_{X}.   \]
	Lemma \ref{nonlin gen lemma} then provides \eqref{abs cond 1} with a constant $C_0 = O(T)$ and also \eqref{abs cond 2}. The claim follows after choosing $T=T(\lV \psi_0 \rV_{X})$ sufficiently small. \end{proof}
	
	

\begin{remark} 
	A more general version of Theorem \ref{maint nonlin}, namely a nonlinear variant of Theorem \ref{maint pert}, can be stated. For any $1\le p \le \infty$ and $\gamma\ge 0$ let $X$ denote either $\cM^{p,1}_{0,s}(\cn)$ with $0\le s \le \gamma$ or $\cW^{1,p}_{r,s}(\cn)$ with $r,s\ge0$ such that $r+s \le \gamma$. The differential operator $L= i\partial_t - \cD_m$ in \eqref{dirac eq nonlin}, namely $L\psi = F(\psi)$,  is now extended to $L= i\partial_t - \cD_m - \sigma_t^{\w}$, where the symbol map $[0,T]\ni t \mapsto \sigma(t,\cdot) \in M^{\infty,1}_{0,2\gamma}(\mcn)$ is continuous for the narrow convergence and the nonlinear term is \eqref{gen nonlin}. We recast the problem in integral form as 
	\[ \psi(t) = U(t,0)\psi_0 - i\int_0^t U(t,\tau)F(\psi(\tau))d\tau, \] where $U(t,\tau)$, $0 \le \tau \le t \le T$ is the linear propagator constructed in the proof of Theorem \ref{maint pert} corresponding to initial data at time $\tau$. In order for the iteration scheme in Theorem \ref{abs nonlin} to work it is enough to prove that $U(t,\tau)$ is strongly continuous on $X$ jointly in $(t,\tau)$, $0\le \tau \le t \le T$; the latter condition would imply a uniform bound for the operator norm with respect to $t,\tau$ as a consequence of the uniform boundedness principle. 
	Theorem \ref{maint pert} yields strong continuity of $U(t,\tau)$ in $t$ for fixed $\tau$. The time-reversibility enjoyed by the equation implies that the same holds after switching $\tau$ and $t$. Furthermore, for $\tau' \le \tau \le t$ we have
	\begin{align*} \lV U(t,\tau)\psi_0 - U(t,\tau')\psi_0 \rV_{X} \le & C \lV \psi_0 - U(\tau,\tau')\psi_0 \rV_{X}, 
	\end{align*}
	hence the map $\tau \mapsto U(t,\tau)\psi_0$ is continuous in $X$, uniformly with respect to $t$ and this gives the desired result. 
\end{remark}

\textbf{Acknowledgments.} The author gratefully thanks Professor Fabio Nicola for fruitful discussions and constant support.

\end{document}